\documentclass[final]{siamart251216}
\usepackage{amsmath,amssymb,amsfonts,graphicx,bm} 
\usepackage{subcaption}
\usepackage{hyperref}

\newsiamremark{remark}{Remark}
\newsiamthm{assumption}{Assumption}

\crefname{remark}{Remark}{Remarks}
\crefname{assumption}{Assumption}{Assumptions}

\usepackage{algorithm}
\usepackage[noend]{algorithmic}

\DeclareMathOperator*{\argmin}{arg\,min}
\title{Convergence of high-index saddle dynamics for degenerate saddle points on critical manifolds\thanks{Submitted to.
\funding{L.Z. was supported by the National Natural Science Foundation of China (No. 12225102, T2321001, 12288101) and National Key Research and Development Program of China (No. 2024YFA0919500). 
L.T. was supported by the National Key Research and Development Program of China (No. 2022YFA1008200) and Shanghai Institute for Mathematics and Interdisciplinary Sciences (SIMIS) under grant SIMIS-ID-2025-ST.
J.Y. was supported by ``the Fundamental Research Funds for the Central Universities'' (No.~2253100016).}}}

\author{
Tao Luo\thanks{School of Mathematical Sciences, Institute of Natural Sciences, MOE-LSC, CMA-Shanghai, Shanghai Jiao Tong University, Shanghai 200240,
China (\email{luotao41@sjtu.edu.cn}).}
\and
Jianyuan Yin\thanks{School of Mathematical Sciences, 
Laboratory of Mathematics and Complex Systems, Ministry of Education, Beijing Normal University, Beijing 100875, China (\email{jyyin@bnu.edu.cn}).}
\and
Lei Zhang\thanks{School of Mathematical Sciences, Beijing International Center for Mathematical Research, Center for Quantitative Biology, Center for Machine Learning Research, Peking University, Beijing 100871, China (\email{zhangl@math.pku.edu.cn}).}
\and
Shixue Zhang\thanks{School of Mathematical Sciences, Peking University, Beijing 100871, China (\email{zhangshixue@stu.pku.edu.cn}).} 
} 

\begin{document}
\maketitle
\begin{abstract}
The high-index saddle dynamics (HiSD) method provides a powerful framework for finding saddle points and constructing solution landscapes. 
While originally derived for nondegenerate critical points, HiSD has demonstrated empirical success in degenerate cases, where the Hessian matrix exhibits zero eigenvalues. 
However, the mathematical and numerical analysis of HiSD for degenerate saddle points remains unexplored. 
In this paper, utilizing Morse--Bott functions, we present a rigorous analysis of HiSD for computing degenerate saddle points on a critical manifold.
We prove the local convergence of the continuous HiSD and establish the linear convergence rate of the discrete HiSD algorithm. 
Furthermore, we provide a theoretical explanation for the gradient alignment tendency, revealing that the gradient direction asymptotically aligns with a specific Hessian eigenvector. 
Our analysis also elucidates the flexibility in selecting the index for HiSD in the context of degenerate saddle points.
We validate our analytical results through numerical experiments on neural-network loss landscapes and demonstrate that momentum-accelerated variants of HiSD achieve rapid convergence to degenerate saddle points.
\end{abstract}

\begin{keywords}
high-index saddle dynamics, saddle point, degenerate saddle point, solution landscape, Morse--Bott function, linear convergence
\end{keywords}

\begin{MSCcodes}
37M05, 37N30, 65L20
\end{MSCcodes}

\pagestyle{myheadings}
\thispagestyle{plain}

\markboth{Luo, Yin, Zhang and Zhang}{Convergence of high-index saddle dynamics for degenerate saddle points on critical manifolds}

\section{Introduction}
The computation of saddle points has long been an important topic across various scientific fields. Saddle points on smooth energy landscapes play a crucial role in phase transitions \cite{prl2010, wang2010phase, samanta2014microscopic, zhang2016recent}, chemical reactions \cite{heyden2005efficient, jacobson2017automated}, and training neural networks in machine learning \cite{NIPS2014, daneshmand2018, zhang2021embedding}. 
However, locating saddle points is a challenging problem due to  their unstable nature, especially in high-dimensional systems where the transition mechanism is unknown.

Various numerical methods have been developed to compute saddle points, which can broadly be classified into two categories.
Path‐finding methods, such as string methods \cite{weinan2002string} and nudged elastic band methods \cite{henkelman2000improved}, aim to find the minimum energy path and transition state between initial and final states.
In contrast, surface‐walking methods aim to locate the saddle point starting from an initial guess.
Examples of surface‐walking methods for index-1 saddle points include gentlest ascent dynamics \cite{gad}, dimer-type methods \cite{Henkelman1999,zhang2012shrinking}, and the iterative minimization formulation \cite{gao2015iterative}. 
Recently, the high‐index saddle dynamics (HiSD) method \cite{yin2019high} has provided a unified surface‐walking framework for searching for index-$k$ saddle points, paving the way for various extensions and variants \cite{yin2021searching, luo2025accelerated, su2025improved}. 
The solution landscape, which consists of all stationary points and their connections, can be constructed by combining the HiSD method with an upward/downward search algorithm \cite{YinPRL, yin2021searching, zhang2023review}. 
The solution landscape method has shown its effectiveness in many scientific applications, including liquid crystals \cite{YinPRL, han2021solution}, nucleation of quasicrystals \cite{yin2021transition}, reaction-diffusion systems \cite{WU2025reaction}, and Bose--Einstein condensates \cite{yin2024revealing}.

For the HiSD method, it is standard practice to assume that the target saddle point is nondegenerate during derivation and theoretical analysis.
A saddle point is termed nondegenerate if the Hessian (second-order derivative) of the energy function at this point has a bounded inverse, \textit{i.e.} all the eigenvalues of the Hessian are nonzero. 
According to classical Morse theory \cite{milnormorse}, nondegenerate saddle points are inherently isolated, and Morse's lemma guarantees that around such points, the energy function behaves locally like a nondegenerate quadratic form.
Under the nondegeneracy assumption, the saddle point can be characterized as the solution to a minimax problem. Mathematical and numerical analyses of HiSD have established local stability and convergence \cite{yin2019high, luo2022convergence, zhang2022sinum, su2025improved}.

However, for energy functions in many practical applications, the stationary points, including both saddle points and minima, are not isolated but form a continuous manifold, often referred to as a critical manifold, which leads to degeneracy.
For example, in physical systems, when a continuous symmetry is broken to a discrete symmetry, Goldstone modes emerge \cite{goldstone1962broken}, the number of which equals the multiplicity of zero eigenvalues. 
Notably, in over-parameterized neural networks, empirical studies reveal that during training, a large fraction of the Hessian eigenvalues cluster near zero \cite{sagun2018}; consequently, the loss landscape exhibits a significant degree of degeneracy. 
Furthermore, as shown by the parameter condensation phenomenon during training \cite{luo2021phase}, the intrinsic symmetries of neural networks lead to the existence of degenerate saddle points \cite{zhang2021embedding}.

To mathematically describe such structures, the theory of Morse--Bott functions extends classical Morse theory to these degenerate settings, allowing critical points to form smooth manifolds rather than isolated points \cite{banyaga2010morse}. 
Consistent with classical Morse theory \cite{milnormorse}, the index of a critical point is defined as the negative inertia index of the Hessian, while the nullity corresponds to the multiplicity of zero eigenvalues.
This framework provides a natural language for analyzing energy landscapes in complex systems and has been applied to the loss landscapes of neural networks \cite{zhang2025geometry}.

Although the derivation and analyses of the HiSD method are based on the assumption of nondegeneracy, this method has successfully identified degenerate saddle points in various scientific problems \cite{yin2021transition, yin2022solution, yin2022constrained, yin2024revealing}. 
By treating those zero eigenvectors as unstable directions, the HiSD method can successfully climb out of a critical manifold of minima and locate a degenerate saddle point, while many surface-walking methods for index-1 saddle points fail to do so due to their inability to accommodate zero eigenvectors.
Specifically, to search for an index-$s$ saddle point with nullity $m$, numerical evidence suggests that one can employ HiSD with different indices ranging from $s$ to $(s+m)$. 
However, a rigorous theoretical explanation for why HiSD with these indices converges to degenerate saddle points, including stability analysis and convergence rates, remains absent from previous literature.

In this paper, we aim to establish a theoretical framework for applying HiSD to degenerate saddle points by considering Morse--Bott functions as a generalization of Morse functions. 
Specifically, for a saddle point located on a critical manifold that is nondegenerate in the normal space, we derive the following results under some assumptions:
\begin{itemize}
\item We establish the local asymptotic stability of the critical manifold under continuous HiSD and prove its convergence to a point on the manifold.
\item We prove that the discrete HiSD algorithm with accurate eigenvectors converges to the critical manifold at a linear convergence rate. 
\item We provide a rigorous justification for the gradient alignment tendency, where the gradient direction aligns with a specific Hessian eigenvector asymptotically during iterations.
\item We demonstrate the flexibility of the HiSD index, showing that the theoretical results hold for indices ranging from $s$ to $s+m$, where $s$ is the index and $m$ is the nullity.
\item We validate these findings through numerical experiments and show that momentum-accelerated variants of HiSD achieve rapid convergence to degenerate saddle points.
\end{itemize}

The rest of the paper is organized as follows. 
In \Cref{sec:2}, we provide the definition of Morse--Bott functions and review the continuous dynamics and the discrete algorithms of HiSD.
In \Cref{sec:3}, we prove asymptotic stability and convergence results and explain the gradient alignment tendency.
In \Cref{sec:4}, we present numerical experiments to validate our theoretical findings and demonstrate the high efficiency of momentum-accelerated variants of HiSD in degenerate settings. 
Finally, we present concluding remarks in \Cref{sec:5}, with the proofs of lemmas provided in Appendix \ref{app}.

\section{Preliminaries} \label{sec:2}
\subsection{Morse--Bott function}
For a smooth function $E:\mathbb{R}^d\to\mathbb{R}$, a point $\theta \in \mathbb{R}^d$ is called a \textit{critical point} if the gradient vanishes, i.e., $\nabla E(\theta) = 0$. 
The \textit{(Morse) index} of a critical point, defined as the number of negative eigenvalues of its Hessian $\nabla^2 E$, characterizes the local nature of the critical point. 
The \textit{nullity} of a critical point is defined as the multiplicity of zero eigenvalues of its Hessian $\nabla^2 E$, \textit{i.e.}, the dimension of the kernel of the Hessian. 
A critical point is said to be \textit{nondegenerate} if its Hessian at that point has a bounded inverse, i.e., the Hessian has no zero eigenvalues. 
This leads to the following definition of Morse functions \cite{milnormorse}, which constitute the foundation of Morse theory and play a central role in the analysis of gradient flows.
\begin{definition}[Morse function]
A smooth function $E$ is called a \emph{Morse function} if all its critical points are nondegenerate.
\end{definition}

To handle degenerate cases, Morse–Bott theory generalizes classical Morse theory by accommodating critical submanifolds in place of isolated critical points \cite{banyaga2010morse, bott1954}.
Denote by $\mathcal{M}_E$ the set of all critical points of a smooth function $E:\mathbb{R}^d\to\mathbb{R}$, and let $\mathcal{M}$ be a connected component of $\mathcal{M}_E$.
The key insight of Morse--Bott theory is to examine how the Hessian behaves on the critical submanifold $\mathcal{M}$. 
Based on the Euclidean metric in $\mathbb{R}^d$, the entire space at each point $\theta\in \mathcal{M}$ admits the orthogonal decomposition $\mathbb{R}^d=T_{\theta} \mathcal{M}\oplus N_{\theta} \mathcal{M}$, where $N_{\theta} \mathcal{M}$ is the normal space to $\mathcal{M}$ at $\theta$.

An important fact is that the tangent space $T_{\theta} \mathcal{M}$ at the critical point $\theta$ lies entirely in the kernel of the Hessian, which can be derived straightforward. 
For any ${w}\in T_{\theta} \mathcal{M}$, there exists a smooth curve $\gamma:(-\varepsilon, \varepsilon)\to \mathcal{M}$ satisfying $\gamma(0)=\theta$ and $\dot{\gamma}(0)={w}$. 
Since $\nabla E(\gamma(t))={0}$ for all $t \in (-\varepsilon, \varepsilon)$, we have $\nabla^2 E(\gamma(t))\dot{\gamma}(t) = {0}$ by taking derivatives with respect to $t$.
Therefore, at $t=0$, we have $\nabla^2 E(\theta) {w}={0}$, which implies that $T_{\theta} \mathcal{M} \subseteq \operatorname{Ker} (\nabla^2 E(\theta))$. 
This observation implies that the nontrivial behavior of the Hessian is restricted to the normal space $N_{\theta} \mathcal{M}$.  
Consequently, the concept of nondegeneracy can be naturally extended to the critical submanifold $\mathcal{M}$.

\begin{definition}[nondegenerate critical submanifold]
A smooth submanifold $\mathcal{M}$ $\hookrightarrow \mathbb{R}^d$ is called a \emph{nondegenerate} critical submanifold of $E$ if \cite{nicolaescu2007invitation}:
\begin{enumerate}
\item[(1)] $\mathcal{M}\subset \mathcal{M}_E$;
\item[(2)] $\mathcal{M}$ is connected;
\item[(3)] $\forall \theta\in \mathcal{M}$, $\nabla^2 E(\theta)$ is nondegenerate on $N_{\theta} \mathcal{M}$.
\end{enumerate}
\end{definition}
Note that the condition (3) is equivalent to $T_{\theta} \mathcal{M}=\operatorname{Ker} (\nabla^2 E(\theta))$.
Finally, we introduce the definition of Morse--Bott functions.
\begin{definition}[Morse--Bott function]
A smooth function $E:\mathbb{R}^d\to\mathbb{R}$ is called a \emph{Morse--Bott function} if every connected component of $\mathcal{M}_E$ is a nondegenerate critical submanifold.
\end{definition}

For Morse--Bott functions, the following lemma gives a fundamental result that provides a local normal form around a critical point on a critical submanifold \cite{Banyaga2004}.
\begin{lemma}[Morse--Bott lemma]
\label{lemma:bott}
Let $E: \mathbb{R}^d \to \mathbb{R}$ be a Morse--Bott function, and let $\mathcal{M}$ be a connected component of $\mathcal{M}_E$ of dimension $m$. 
For any index-$s$ saddle point $\theta^* \in \mathcal{M}$, there exists a neighborhood $U\subset \mathbb{R}^d $ of $\theta^*$ and a local diffeomorphism $\phi: U \to V \subset \mathbb{R}^m \times \mathbb{R}^{d-m}$ such that $\phi(\theta^*) = 0$.
If we denote the local coordinates by $(x, y) = (x_1, \ldots, x_m, y_1, \ldots, y_{d-m})$, then $\phi$ straightens the manifold locally such that $\phi(U \cap \mathcal{M}) = \{(x, y) \in V : y = 0\}$. 
Furthermore, for all $\theta\in U$, the function $E$ can be expressed in these coordinates $\phi(\theta)=(x,y)$ as:
\begin{equation}\label{eqn:morsebott}
E(\theta) = E(\mathcal{M}) - y_1^2 - \cdots - y_s^2 + y_{s+1}^2 + \cdots + y_{d-m}^2.
\end{equation}
\end{lemma}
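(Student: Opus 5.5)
We prove the Morse--Bott lemma by reducing it to a parametrized Morse lemma. First we straighten $\mathcal{M}$, then we apply the Palais/Moser-type argument of the classical Morse lemma fiberwise in the normal directions, with the tangential coordinates as smooth parameters.

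\emph{Step 1 (straightening).} Since $\mathcal{M}$ is an embedded $m$-dimensional submanifold, the submanifold chart theorem gives a diffeomorphism $\psi$ from a neighborhood of $\theta^*$ onto a neighborhood of $0\in\mathbb{R}^m\times\mathbb{R}^{d-m}$ with $\psi(\theta^*)=0$ and $\psi(U\cap\mathcal{M})=\{y=0\}$. A concrete choice is $\psi^{-1}(x,y)=\gamma(x)+\sum_j y_j\nu_j(x)$, where $\gamma$ is a local parametrization of $\mathcal{M}$ and $\nu_j$ is a smooth orthonormal normal frame; this is a local diffeomorphism by the inverse function theorem. Set $F=E\circ\psi^{-1}-E(\mathcal{M})$. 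Here $E$ is constant on the connected set $\mathcal{M}$, since $\nabla E=0$ there.

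\emph{Step 2 (Hadamard form).} We have $F(x,0)=0$ and $\partial_yF(x,0)=0$ because every point of $\{y=0\}$ is critical. Taylor's formula with integral remainder in $y$ therefore gives
\[
F(x,y)=y^{\top}A(x,y)\,y,\qquad A(x,y)=\int_0^1(1-t)\,\partial_y^2F(x,ty)\,dt,
\]
where $A$ is smooth and symmetric. Moreover $A(x,0)=\tfrac12\partial_y^2F(x,0)$, which is the Hessian of $E$ restricted to the normal directions. Condition (3) of the nondegeneracy definition (equivalently $T_\theta\mathcal{M}=\operatorname{Ker}\nabla^2E(\theta)$) makes $A(0,0)$ nondegenerate. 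Because $\nabla^2E$ vanishes on $T_\theta\mathcal{M}$, the negative eigenvalues of $\nabla^2E(\theta^*)$ all come from the normal block. Hence $A(0,0)$ has exactly $s$ negative eigenvalues.

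\emph{Step 3 (smooth diagonalization).} Choose a constant matrix $P$ with $P^{\top}A(0,0)P=D:=\operatorname{diag}(-I_s,I_{d-m-s})$. The map $B\mapsto R^{\top}BR$ near $R=I$ has surjective differential $H\mapsto 2\,\mathrm{sym}(DH)$, so by the implicit function theorem there is a smooth symmetric-matrix-valued map $Q(B)$ near $B=P^{\top}A(0,0)P$ with $Q(B)^{\top}BQ(B)=D$. This is the step I expect to be the main obstacle: it must be done so that the result depends smoothly on $(x,y)$, whereas eigenvector-based diagonalization is not smooth. Using $Q$ avoids that problem. An equivalent alternative is the inductive completion-of-squares argument of the classical Morse lemma, in which smoothness is preserved because all operations are algebraic in the smooth entries of $A$.

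Then set $R(x,y)=P\,Q\bigl(P^{\top}A(x,y)P\bigr)$. This gives $R^{\top}AR=D$ on a neighborhood, and hence $A=S^{\top}DS$ with $S=R^{-1}$. Define new coordinates $(x,\tilde y)=(x,S(x,y)y)$. Its Jacobian at $0$ is block triangular with invertible diagonal blocks $I$ and $S(0,0)$, so by the inverse function theorem it is a local diffeomorphism. It also preserves $\{y=0\}$, since $\tilde y=0$ exactly when $y=0$. In these coordinates
\[
F=\tilde y^{\top}D\tilde y=-\tilde y_1^2-\cdots-\tilde y_s^2+\tilde y_{s+1}^2+\cdots+\tilde y_{d-m}^2 .
\]
Composing this map with $\psi$ and shrinking $U$ gives $\phi$ and \eqref{eqn:morsebott}.
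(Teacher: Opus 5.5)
The paper gives no proof of this lemma. It quotes it from \cite{Banyaga2004} and uses it only to motivate the model example at the start of Section~3. Your argument is therefore an independent, self-contained proof, and it is correct apart from one slip in Step~3 described below.

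What you actually carry out is the classical Hadamard/congruence proof of the Morse lemma run with $x$ as a parameter, not the Palais/Moser path method announced in your first sentence. That mislabel is harmless. The three steps are: straighten via $\gamma(x)+\sum_j y_j\nu_j(x)$; write $F=y^\top A(x,y)y$ from $F(x,0)=0$ and $\partial_yF(x,0)=0$; then find a smooth congruence $A=S^\top DS$. The following checks all go through:
\begin{itemize}
\item the index count, via the splitting $\nabla^2E(\theta^*)=0\oplus\nabla^2E(\theta^*)|_{N_{\theta^*}\mathcal{M}}$;
\item the Jacobian of $(x,y)\mapsto(x,S(x,y)y)$ at $0$, which is in fact block diagonal, $\operatorname{diag}(I,S(0,0))$;
\item the preservation of $\{y=0\}$.
\end{itemize}
Compared with citing, your route costs a page but uses nothing beyond the inverse and implicit function theorems. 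It also makes the regularity bookkeeping explicit. For $E\in\mathcal{C}^k$ the matrix $A$ is only $\mathcal{C}^{k-2}$, so the chart built this way needs $k\geqslant 3$. This is consistent with the paper's rigorous analysis in Section~3, which runs under the $\mathcal{C}^2$ hypotheses and Assumption~\ref{ass:1} and never invokes the normal form.

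The slip in Step~3 is the claim that $Q(B)$ can be chosen symmetric near $I$. This is false. On symmetric $H$ the differential of $R\mapsto R^\top BR$ at $(B,R)=(D,I)$ is $H\mapsto HD+DH$. Its off-diagonal $s\times(d-m-s)$ block vanishes identically, so it is not surjective there, and the implicit function theorem does not apply. Concretely, take $D=\operatorname{diag}(-1,1)$ and $B=D+\varepsilon(e_1e_2^\top+e_2e_1^\top)$ with $\varepsilon\neq0$: no symmetric $Q$ near $I$ satisfies $Q^\top BQ=D$.

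Symmetry is never used afterwards, so you can simply drop it. Either of the following works:
\begin{itemize}
\item Restrict $R$ to $I+D\,\mathrm{Sym}$. There the differential $DK\mapsto 2K$ is an isomorphism onto $\mathrm{Sym}$, so the implicit function theorem gives a unique smooth $Q(B)$. As stated, ``surjective differential'' alone does not yet pin down a solution map.
\item Set $S=(DB)^{1/2}$ via the binomial series. Then $S^\top DS=B$, because $(DB)^\top D=D(DB)$ and hence every power series $p$ satisfies $p(DB)^\top D=D\,p(DB)$.
\end{itemize}
Also, ``the map $B\mapsto R^\top BR$'' should read $R\mapsto R^\top BR$.
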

This lemma shows that near any critical submanifold, a Morse--Bott function remains constant along the critical submanifold and behaves like a nondegenerate quadratic form in the normal directions.

\subsection{Review of the HiSD Method}
Recently, the HiSD method has emerged as a potent tool for searching for saddle points of any index \cite{yin2019high}. 
Assumed that the energy function is $E\in \mathcal{C}^2(\mathbb{R}^d, \mathbb{R})$,
the HiSD for an index-$k$ saddle point ($k$-HiSD) of the energy function $E$ has the form of:
\begin{equation}\label{eqn:hisd}
\left\{
\begin{aligned}
\dot{\theta} & = - \beta
\left( I - \sum_{p=1}^k 2v_p v_p^\top \right)
\nabla E(\theta),\\
\dot{v}_p &=-\zeta
\left(I-v_pv_p^\top-\sum_{q=1}^{p-1}2v_qv_q^\top\right)
\nabla^2E(\theta)v_p, \quad p=1,\cdots,k,
\end{aligned}
\right.
\end{equation}
where $\{v_p\}_{p=1}^k$ are orthonormal vectors which approximate eigenvectors corresponding to the smallest $k$ eigenvalues of the Hessian $\nabla^2 E(\theta)$.
$\beta$ and $\zeta$ are positive relaxation parameters.
$v^\top w$ represents the inner product of $v$ and $w$ in the Euclidean space $\mathbb{R}^d$.

This dynamical system \eqref{eqn:hisd} originates from the minimax variational characterization of a nondegenerate index-$k$ saddle point and the construction of its associated $k$-dimensional unstable subspace. 
With gradient ascent flow on the subspace spanned by $\{v_p\}_{p=1}^k$ which approximate the smallest $k$ eigenvectors of $\nabla^2E(\theta)$ and gradient descent flow on its orthogonal complement, this dynamics guides the search for an index-$k$ saddle point.
With an initial condition satisfying that $\{v_p\}_{p=1}^k$ are orthonormal, the solution to \eqref{eqn:hisd} satisfies this orthonormal condition. 

\subsection{Numerical algorithms of HiSD}
In discrete numerical algorithms, we denote by $\theta^{(t)}\in \mathbb{R}^{d}$ the parameter values after $t$ iteration steps, and by $v_p^{(t)}$ the directions. 
An explicit Euler scheme of the HiSD method is presented in \Cref{alg:ori}.
Here, ``EigenSol'' represents an eigenvector solver that, given initial values $\{ v_p^{(t)}\}_{p=1}^k$, computes eigenvectors corresponding to the smallest $k$  eigenvalues of $\nabla^2 E(\theta^{(t+1)})$, \\---either accurately or approximately. Typical choices include simultaneous Rayleigh-quotient iterative minimization methods  \cite{longsine1980simultaneous}, and locally optimal block preconditioned conjugate gradient (LOBPCG) methods \cite{knyazev2001toward}.

\begin{algorithm}[H]
\caption{Numerical algorithm of $k$-HiSD}
\begin{algorithmic}
\REQUIRE{$k\in\mathbb{N}$, $\theta^{(0)}\in\mathbb{R}^d$, 
$\left\{ v_p^{(0)}\right\}_{p=1}^k\subset \mathbb{R}^d$ satisfying 
${v}_{p}^{(0){\top}} {{ v_q}}^{(0)}=\delta_{pq}$.}
\FOR{$t=0,1,\cdots,T-1$}
\STATE {$\theta^{(t+1)} = \theta^{(t)} - \beta_t
\left(I-\sum_{p=1}^k 2 v_p^{(t)}{v}_{p}^{(t){\top}}\right)
\nabla E(\theta^{(t)})$.}
\hfill{(main iteration)}
\STATE {$\left\{ v_p^{(t+1)}\right\}_{p=1}^k = \operatorname{EigenSol}\left( \left\{ v_p^{(t)}\right\}_{p=1}^k, \nabla^2 E(\theta^{(t+1)}) \right)$.}
\ENDFOR
\ENSURE $\theta^{(T)}$
\end{algorithmic}
\label{alg:ori}
\end{algorithm}

In practical computations, the explicit Euler scheme of HiSD often suffers from slow convergence, thereby requiring an excessive number of iterations to compute saddle points. 
To address this issue, momentum‐accelerated variants of HiSD have been developed. 
Inspired by the simplicity and acceleration efficiency of the Heavy‐Ball technique \cite{Polyak1964SomeMO}, the HiSD method with Heavy‐Ball acceleration is proposed in \cite{luo2025accelerated}. 
By setting $\theta^{(-1)}=\theta^{(0)}$, the main iteration step in \Cref{alg:ori} is given by:
\begin{equation}\label{eqn:neavyball}
\theta^{(t+1)} = \theta^{(t)} - \beta_t
\left(I-\sum_{p=1}^k 2 v_p^{(t)}{v}_{p}^{(t){\top}}\right)
\nabla E(\theta^{(t)})+ \gamma (\theta^{(t)}-\theta^{(t-1)}),  
\end{equation}
where $\gamma \in [0, 1)$ is the momentum parameter. 

Similarly, Nesterov's acceleration \cite{Nesterov1983} has been applied to improve the performance of the HiSD method.
Specifically, a variant incorporating this technique has been implemented in \cite{liu2024neural}, with the main iteration step in \Cref{alg:ori} as follows:
\begin{equation*}
\xi^{(t)} =\theta^{(t)}+\gamma_t \left(\theta^{(t)}-\theta^{(t-1)}\right),
\quad
\theta^{(t+1)} = \xi^{(t)} - \beta_t
\left(I-\sum_{p=1}^k 2 v_p^{(t)}{v}_{p}^{(t){\top}}\right)
\nabla E(\xi^{(t)}),
\end{equation*}
where $\gamma_t$ is the momentum parameter, typically set to $\gamma_t=\frac{t}{t+3}$. 
In practical computations, restart strategies are often employed to further enhance performance.

\section{Analysis of HiSD in degenerate cases}\label{sec:3}
In this section, we present the theoretical analysis of HiSD for degenerate saddle points on a critical manifold. 

First, based on the local normal form in \Cref{lemma:bott}, we illustrate our idea with a simple example. 
Consider $\theta = (x,y) = (x_1, \cdots, x_m, y_1, \cdots, y_{d-m}) \in\mathbb{R}^d$ and the canonical form of the energy function,
\begin{equation*}
E(\theta) = - y_1^2 - \cdots - y_s^2 + y_{s+1}^2 + \cdots + y_{d-m}^2.
\end{equation*}
In this example, $\mathcal{M} = \{(x,0)\in \mathbb{R}^d\}$ constitutes an $m$-dimensional nondegenerate critical manifold of index-$s$ saddle points, and $U=\{(x,0)\in \mathbb{R}^d\}$ also serves as the nullspace of the Hessian at any point. 
For the $k$-HiSD with $k=s,\cdots,s+m$, we take the first $s$ unstable directions as the eigenvectors $v_p = (0,e_p)$, $p=1,\cdots,s$, where $e_p$ is the standard basis vector, and the $(k-s)$ other unstable directions $v_p$, $p=s+1, \cdots,k$ as an arbitrary orthonormal set in $U$.
Then the explicit Euler scheme with fixed step sizes $\beta_t\equiv\beta\in(0,1)$,
\begin{equation*}
\theta^{(t+1)} = \theta^{(t)} - \beta\left(0, 2y^{(t)}\right) = \left(x^{(t)}, (1-2\beta)y^{(t)}\right),
\end{equation*}
converges to a saddle point $\left(x^{(0)}, 0\right)\in\mathcal{M}$ at a linear rate. 
This example intuitively illustrates the flexibility of the index $k$ of HiSD and the $(k-s)$ unstable direction in the degenerate setting. 

We now proceed to the rigorous theoretical analysis. 
We assume that $\mathcal{M}\subset\mathbb{R}^d$ is a closed, $\mathcal{C}^2$–embedded, connected, nondegenerate critical manifold of dimension $m$, consisting of index-$s$ saddle points of a $\mathcal{C}^2$ function $E$ with bounded Hessian. 
Under this setting, we investigate the convergence of the $k$-HiSD to the saddle manifold $\mathcal{M}$.
Specifically, we establish the local convergence for $k$-HiSD with $k=s,\dots,s+m$ and derive the corresponding convergence rate.

\subsection{Notations}
For a subspace $V\subset\mathbb{R}^{d}$, the symbol $V^\perp$ denotes its orthogonal complement, and $\mathcal{P}_V: \mathbb{R}^{d}\to \mathbb{R}^{d}$ denotes the orthogonal projection operator onto $V$.
Similarly, we define the nearest-point projection operator $\mathcal{P}_{\mathcal{M}}$,
\begin{equation}\label{eqn:projectionm}
\mathcal{P}_\mathcal{M}(\theta) =
\argmin_{\theta'\in\mathcal M} 
\|\theta-\theta'\|_2,
\end{equation}
to project $\theta \in \mathbb{R}^{d}$ onto the saddle manifold $\mathcal{M}$ and we employ $\hat{\theta}$ to denote $\mathcal{P}_\mathcal{M}(\theta)$ for brevity.
The nearest-point projection satisfies the important property that
\begin{equation}\label{eqn:proj}
\theta-\hat{\theta} \in 
\operatorname{Im}( \nabla^2 E(\hat{\theta}))
=\left(\operatorname{Ker}( \nabla^2 E(\hat{\theta}))\right)^\perp. 
\end{equation}

\begin{remark}
The nearest-point projection operator $\mathcal{P}_\mathcal{M}$ is well defined at least in a neighborhood of $\mathcal{M}$.
\begin{enumerate}
\item \emph{Existence.}
Because $\mathcal M$ is closed in $\mathbb R^d$, the infimum in the definition
\begin{equation}\label{eqn:distance}
\operatorname{dist}(\theta,\mathcal M) =
\inf_{\theta'\in\mathcal M} 
\|\theta-\theta'\|_2,
\end{equation}
can be attained for every $\theta \in \mathbb R^d$.
\item \emph{Uniqueness.}
By the tubular‐neighborhood theorem \cite{hirsch2012differential}, 
for each ${\vartheta}\in\mathcal M$, there exists $r_{{\vartheta}}>0$ such that the normal bundle map
\[
  (\theta',{v})\mapsto \theta'+{v},\quad
  \theta'\in\mathcal M,\;{v}\in N_{\theta'}\mathcal M,\;\|{v}\|_2
  < r_{{\vartheta}},
\]
is a diffeomorphism onto the open tube
\[
  \mathcal{U}_{{\vartheta}}=
  \left\{\theta\in\mathbb R^d:\operatorname{dist}(\theta,\mathcal M)<r_{{\vartheta}}, \|\theta-{\vartheta}\|_2 <r_{{\vartheta}}\right\}.
\]
Consequently, for each $\theta\in \mathcal{U}_{{\vartheta}}$, the nearest‐point projection  $\mathcal P_{\mathcal M}(\theta)$ is unique.
\end{enumerate}
\end{remark}

\begin{remark}
$\hat\theta$ is a local minimizer of the constrained problem $\min\limits_{\theta'\in\mathcal{M}}\;\tfrac12\|\theta-\theta'\|_2^2$, 
and the KKT condition 
$\theta-\hat\theta + \nabla^2 E(\hat\theta)\lambda = 0$
yields $\theta - \hat\theta \in \operatorname{Im}(\nabla^2 E(\hat\theta))$.
The second equality in \eqref{eqn:proj} follows from the symmetry of $\nabla^2 E(\hat\theta)$.
\end{remark}

The following assumption, which holds throughout this section, is proposed in a suitable tubular neighborhood of the saddle manifold $\mathcal{M}$.
Denote the eigenvalues of $\nabla^2 E(\theta)$ by $\{\lambda_p\}_{p=1}^{d}$ arranged in ascending order and the corresponding orthonormal eigenvectors by $\{u_p\}_{p=1}^d$, and those of $\nabla^2E(\hat{\theta})$ by $\{\hat{\lambda}_p\}_{p=1}^{d}$ and $\{\hat{u}_p\}_{p=1}^d$, respectively.
\begin{assumption}\label{ass:1}
There exist a $\delta$-tubular neighborhood $\mathcal{U}(\mathcal{M},\delta)=\{\theta\in \mathbb{R}^d:\|\theta-\hat{\theta}\|_2 < \delta\}$ and positive constants $M,L,\mu>0$ satisfying $M\delta \leqslant {\mu}/{4}$, such that:
\begin{enumerate}
\item [(i)] The nearest-point projection operator $\mathcal{P}_{\mathcal{M}}$ is well-defined in $\mathcal{U}(\mathcal{M},\delta)$.
\item [(ii)] For all $\theta, \theta' \in \mathcal{U}(\mathcal{M},\delta)$, $\|\nabla^2 E(\theta)-\nabla^2 E(\theta')\|_2\leqslant M\|\theta-\theta'\|_2$.
\item [(iii)] For all $\theta\in \mathcal{U}(\mathcal{M},\delta)$, 
$\lambda_1, \cdots, \lambda_s \in [-L, -\mu]$, 
$\lambda_{s+m+1}, \cdots, \lambda_{d} \in [\mu,L]$.
\end{enumerate}
\end{assumption}

\begin{remark}
For all $\theta\in \mathcal{U}(\mathcal{M},\delta)$,
Assumption \ref{ass:1} implies an upper bound for $|\lambda_p|$ for $p=s+1,\cdots,s+m$. 
Because $\hat{\theta}\in \mathcal{M}$, we directly have $\hat{\lambda}_{p}=0$, 
and hence, based on \cite[Theorem 8.1.5]{golub2013matrix}, we have,
\begin{equation*}
|\lambda_p|=|\lambda_p-\hat{\lambda}_p| \leqslant 
\|\nabla^2 E(\theta)-\nabla^2 E(\hat{\theta})\|_2 \leqslant 
M\|\theta-\hat{\theta}\|_2 \leqslant 
M\delta\leqslant {\mu}/{4}. 
\end{equation*}
Therefore, Assumption \ref{ass:1} ensures that those eigenvalues close to zero are well separated from the others in the neighborhood of $\mathcal{M}$. 
\label{re:asm}
\end{remark}

\subsection{Local convergence of continuous HiSD}
For nondegenerate index-$k$ saddle points, the stability of $k$-HiSD can be directly established through linearization \cite{yin2019high}, while such approach is no longer applicable to saddle manifolds.
In the degenerate cases, we aim to show that $G(\theta)=\frac{1}{2}\|\nabla E(\theta)\|_2^2$ serves as a local Lyapunov function for $k$-HiSD near the saddle manifold $\mathcal{M}$. 

From Assumption \ref{ass:1}, we define $m$-dimensional subspaces $U = \operatorname{span}\{u_p\}_{p=s+1}^{s+m}$ and $\widehat{U} = \operatorname{span}\{\hat{u}_p\}_{p=s+1}^{s+m}$.
The distance between $U$ and $\widehat{U}$, defined as \cite{golub2013matrix}:
\begin{equation}\label{eqn:projdist}
\operatorname{dist}(U, {\widehat{U}}):=
\left\|\mathcal{P}_{U} - \mathcal{P}_{\widehat{U}}\right\|_2= 
\left\|\mathcal{P}_{{U^\perp}} - \mathcal{P}_{\widehat{U}^\perp}\right\|_2,
\end{equation}
can be bounded using the following lemma with $C=2M/\mu$. 
\begin{lemma}[distance between subspaces]
\label{lemma:dist}
For any $\theta \in \mathcal{U}(\mathcal{M}, \delta)$, we have the following estimate: $\operatorname{dist}(U, \widehat{U}) \leqslant C \|\theta-\hat{\theta}\|_2$. 
\end{lemma}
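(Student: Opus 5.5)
This is a Davis--Kahan $\sin\Theta$ argument for the spectral subspace of near-zero eigenvalues. Write $A=\nabla^2E(\theta)$, $\widehat A=\nabla^2E(\hat\theta)$, and $\Delta=A-\widehat A$. Assumption \ref{ass:1}(ii) gives $\|\Delta\|_2\leqslant M\|\theta-\hat\theta\|_2$.

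First I locate the spectra. By Assumption \ref{ass:1}(iii), every eigenvalue of $A$ indexed outside $\{s+1,\dots,s+m\}$ lies in $[-L,-\mu]\cup[\mu,L]$. The same holds for $\widehat A$, since $\hat\theta\in\mathcal{U}(\mathcal{M},\delta)$. Because $\hat\theta\in\mathcal M$ and $\mathcal M$ is nondegenerate, $\hat\lambda_{s+1}=\dots=\hat\lambda_{s+m}=0$ and $\widehat U=\operatorname{Ker}\widehat A$. Remark \ref{re:asm} gives $|\lambda_p|\leqslant\mu/4$ for $p=s+1,\dots,s+m$.

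Next I bound the cross term. The identity
\[
\mathcal{P}_{U^\perp}\mathcal{P}_{\widehat U}=\mathcal{P}_{U^\perp}(A-\widehat A)\mathcal{P}_{\widehat U}+\mathcal{P}_{U^\perp}A\mathcal{P}_{\widehat U}
\]
is not directly usable, so I argue as in Davis--Kahan instead. Set $X=\mathcal{P}_{U^\perp}\mathcal{P}_{\widehat U}$. Since $\widehat A\mathcal{P}_{\widehat U}=0$, we get $\mathcal{P}_{U^\perp}A\mathcal{P}_{\widehat U}=\mathcal{P}_{U^\perp}\Delta\mathcal{P}_{\widehat U}$. The projector $\mathcal{P}_{U^\perp}$ commutes with $A$, so the left side equals $A\mathcal{P}_{U^\perp}\mathcal{P}_{\widehat U}=AX$. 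On $U^\perp$, $A$ has all eigenvalues of modulus at least $\mu$, hence $\|AX\|_2\geqslant\mu\|X\|_2$. Therefore
\[
\|X\|_2\leqslant \|\Delta\|_2/\mu\leqslant (M/\mu)\|\theta-\hat\theta\|_2 .
\]
By the same argument with the roles of $A$ and $\widehat A$ swapped, I also get $\|\mathcal{P}_{\widehat U^\perp}\mathcal{P}_U\|_2\leqslant 2\|\Delta\|_2/\mu$. Here I use the gap between $|\lambda_p|\leqslant\mu/4$ on $U$ and $|\hat\lambda|\geqslant\mu$ on $\widehat U^\perp$: writing $Y=\mathcal{P}_{\widehat U^\perp}\mathcal{P}_U$, the relation $\widehat A Y - Y A|_U = \mathcal{P}_{\widehat U^\perp}(-\Delta)\mathcal{P}_U$ gives the Sylvester bound $\|Y\|_2\leqslant\|\Delta\|_2/(\mu-\mu/4)\leqslant 2\|\Delta\|_2/\mu$.

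Finally I combine the two bounds. The identity
\[
\mathcal{P}_U-\mathcal{P}_{\widehat U}=\mathcal{P}_U\mathcal{P}_{\widehat U^\perp}-\mathcal{P}_{U^\perp}\mathcal{P}_{\widehat U}
\]
writes the difference as a sum of two pieces with orthogonal ranges and orthogonal co-ranges. Hence its norm is the maximum of the two pieces' norms, not their sum; alternatively, the standard fact that for equal dimensions $\operatorname{dist}(U,\widehat U)=\|\mathcal{P}_{U^\perp}\mathcal{P}_{\widehat U}\|_2$ gives the same conclusion. Either way, $\operatorname{dist}(U,\widehat U)\leqslant 2\|\Delta\|_2/\mu\leqslant (2M/\mu)\|\theta-\hat\theta\|_2=C\|\theta-\hat\theta\|_2$.

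The only delicate point is the gap bookkeeping. I need a separation of at least $\mu/2$ between the near-zero cluster of one matrix and the outer spectrum of the other, and $M\delta\leqslant\mu/4$ supplies this with room to spare. I also need to invoke the dimension-equality fact correctly; I would cite \cite{golub2013matrix} for both the Sylvester/Davis--Kahan estimate and the equality $\|\mathcal P_U-\mathcal P_{\widehat U}\|_2=\|\mathcal P_{U^\perp}\mathcal P_{\widehat U}\|_2$.
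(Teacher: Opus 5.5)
Your proof is correct and is essentially the paper's argument. The paper uses Weyl's inequality to secure a spectral gap and then cites the Davis--Kahan $\sin\Theta$ theorem \cite[Theorem 3.6]{stewart1990matrix} to get $\operatorname{dist}(U,\widehat U)\leqslant \tfrac{2}{\mu}\|\nabla^2E(\hat\theta)-\nabla^2E(\theta)\|_2$; you instead prove that $\sin\Theta$ bound by hand, and your observation $\widehat A\mathcal{P}_{\widehat U}=0$, combined with the equal-dimension identity, in fact yields the sharper constant $M/\mu$ (the displayed ``identity'' you set aside at the start of your second step is false as written, but it is never used).
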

The gradient can also be characterized using the following lemma.
\begin{lemma} \label{lemma:expansion}
For any $\theta, \theta' \in \mathbb{R}^{d}$, the following identity holds:
\begin{align*}
\nabla E(\theta')&=\nabla E(\theta)+\nabla^{2}E(\theta)(\theta'-\theta)+J(\theta'-\theta),\\
\text{where }
J&= \int_0^1 \nabla^2 E\left(\theta+t(\theta'-\theta)\right)\mathrm{d}t - \nabla^2 E(\theta) \in \mathbb{R}^{d\times d}.
\end{align*}
Moreover, if $\theta, \theta' \in \mathcal{U}(\mathcal{M},\delta)$, we have $\|J\|_2\leqslant \frac{M}{2} \|\theta'-\theta\|_2$.
\end{lemma}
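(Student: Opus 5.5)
The identity follows from the fundamental theorem of calculus applied along the segment from $\theta$ to $\theta'$. Set $\gamma(t)=\theta+t(\theta'-\theta)$ for $t\in[0,1]$. Since $E\in\mathcal{C}^2$, the map $t\mapsto\nabla E(\gamma(t))$ is $\mathcal{C}^1$ with derivative $\nabla^2E(\gamma(t))(\theta'-\theta)$. Integrating gives
\begin{equation*}
\nabla E(\theta')-\nabla E(\theta)=\left(\int_0^1\nabla^2E(\theta+t(\theta'-\theta))\,\mathrm{d}t\right)(\theta'-\theta).
\end{equation*}
We then add and subtract $\nabla^2E(\theta)(\theta'-\theta)$. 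This yields exactly the stated expression, with $J$ as defined, and it holds for arbitrary $\theta,\theta'\in\mathbb{R}^d$.

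For the bound, write $J=\int_0^1\left(\nabla^2E(\gamma(t))-\nabla^2E(\theta)\right)\mathrm{d}t$ and move the norm inside the integral. If we can apply the Lipschitz estimate of Assumption \ref{ass:1}(ii) to the pair $\gamma(t),\theta$, we get
\begin{equation*}
\|\nabla^2E(\gamma(t))-\nabla^2E(\theta)\|_2\leqslant Mt\|\theta'-\theta\|_2 .
\end{equation*}
Integrating $t$ over $[0,1]$ then gives $\|J\|_2\leqslant\frac M2\|\theta'-\theta\|_2$.

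The main obstacle is that Assumption \ref{ass:1}(ii) is stated only for points of $\mathcal{U}(\mathcal{M},\delta)$. This tubular neighborhood need not be convex, so $\gamma(t)$ might leave it even though both endpoints lie inside. I would handle this in one of two ways. The first is to read (ii) as a Lipschitz bound along segments joining points of the neighborhood, which is how it is used in practice. The second is to note that in later applications $\theta'$ is close to $\theta$, for instance an iterate update or the projection $\hat\theta$, and then check that the segment stays in the tube. For $\theta'=\hat\theta$ this check works: every point on the segment from $\theta$ to $\hat\theta$ has distance to $\mathcal{M}$ at most its distance to $\hat\theta$, which is at most $\|\theta-\hat\theta\|_2<\delta$. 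Beyond this, I would state the bound under the convention that the segment lies in the region where (ii) holds. The rest is routine.
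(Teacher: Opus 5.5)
Your proof is correct and follows the paper's route: the identity comes from the fundamental theorem of calculus along the segment, and the bound comes from $\|\nabla^2E(\theta+t(\theta'-\theta))-\nabla^2E(\theta)\|_2\leqslant Mt\|\theta'-\theta\|_2$ integrated over $[0,1]$. The paper's proof silently assumes the segment stays in $\mathcal{U}(\mathcal{M},\delta)$, so your convexity caveat is a legitimate refinement rather than a gap on your side, and it is harmless in every application in the paper: the endpoints always lie in a closed ball of radius less than $\delta$ centred at a point of $\mathcal{M}$ (at $\hat\theta$, or at $\hat\theta^{(t)}$ for the pair $\theta^{(t)},\theta^{(t+1)}$ since $\|\theta^{(t+1)}-\hat\theta^{(t)}\|_2<r_t$), and such a ball is convex and contained in the tube.
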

\noindent Furthermore, the gradient $\nabla E(\theta)$ and its orthogonal projections onto $U$ and $U^\perp$ satisfy sharp estimate:  
$\mathcal{P}_{U^\perp}\nabla E(\theta)$ dominates and grows linearly with the distance to the manifold, while $\mathcal{P}_{U}\nabla E(\theta)$ is a higher‐order term.
\begin{lemma}[order‐separated gradient decomposition]\label{lemma:decom}
For $\theta \in \mathcal{U}(\mathcal{M},\delta)$, we have 
$\|\nabla E(\theta)\|_2\leqslant L\|\theta-\hat\theta\|_2$ and
\begin{equation}\label{eqn:decom}
\|\mathcal{P}_{U^\perp}\nabla E(\theta)\|_2 \geqslant
\left(\mu-\tfrac{5}{2}M\|\theta-\hat\theta\|_2\right)
\|\theta-\hat\theta\|_2,
\quad
\|\mathcal{P}_{U}\nabla E(\theta)\|_2 \leqslant
M\|\theta-\hat\theta\|_2^2.
\end{equation}
\end{lemma}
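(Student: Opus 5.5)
Write $h=\theta-\hat\theta$ and apply \Cref{lemma:expansion} with base point $\hat\theta$ and target $\theta$. Since $\nabla E(\hat\theta)=0$, this gives
\[
\nabla E(\theta)=\nabla^2E(\hat\theta)h+\hat J h,\qquad \|\hat J\|_2\leqslant \tfrac{M}{2}\|h\|_2 .
\]
The segment from $\hat\theta$ to $\theta$ stays in $\mathcal U(\mathcal M,\delta)$: every point $\hat\theta+th$ has nearest-point distance at most $t\|h\|_2<\delta$. So the Lipschitz bound in Assumption~\ref{ass:1}(ii) applies along it.

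\textbf{Upper bound.} By Assumption~\ref{ass:1}(iii), $\|\nabla^2E(\hat\theta)\|_2\leqslant L$. Hence $\|\nabla E(\theta)\|_2\leqslant (L+\tfrac M2\|h\|_2)\|h\|_2$. To get exactly $L\|h\|_2$, I would not split off $\hat J$. Instead I use the integral form $\nabla E(\theta)=\int_0^1\nabla^2E(\hat\theta+th)\,\mathrm dt\;h$. Each Hessian on the segment has all eigenvalues in $[-L,L]$, since the middle ones satisfy $|\lambda_p|\leqslant\mu/4\leqslant L$ by Remark~\ref{re:asm}. So the averaged matrix has norm at most $L$, which gives the bound.

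\textbf{Component in $U$.} Project with $\mathcal P_U$ and write $\mathcal P_U=\mathcal P_{\widehat U}+(\mathcal P_U-\mathcal P_{\widehat U})$. By \eqref{eqn:proj}, $h\in\widehat U^\perp$. Since $\widehat U=\operatorname{Ker}\nabla^2E(\hat\theta)$, we have $\mathcal P_{\widehat U}\nabla^2E(\hat\theta)=0$. This leaves
\[
\mathcal P_U\nabla E(\theta)=(\mathcal P_U-\mathcal P_{\widehat U})\nabla^2E(\hat\theta)h+\mathcal P_U\hat Jh .
\]
Bounding these terms directly gives $CL\|h\|^2+\tfrac M2\|h\|^2$, which is too weak when $L\gg\mu$. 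The better route is to expand around $\theta$ instead, using $\nabla E(\hat\theta)=0$:
\[
0=\nabla E(\theta)-\nabla^2E(\theta)h+J'h,\qquad \|J'\|_2\leqslant \tfrac M2\|h\|_2 .
\]
Thus $\nabla E(\theta)=\nabla^2E(\theta)h-J'h$. Now $\mathcal P_U$ commutes with $\nabla^2E(\theta)$, and $\|\nabla^2E(\theta)\mathcal P_U\|_2\leqslant M\|h\|_2$ because the eigenvalues on $U$ are at most $M\|h\|_2$ in modulus (Remark~\ref{re:asm}). Also $\|\mathcal P_Uh\|_2=\|(\mathcal P_U-\mathcal P_{\widehat U})h\|_2\leqslant C\|h\|_2^2$ by \Cref{lemma:dist}. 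So $\|\mathcal P_U\nabla^2E(\theta)h\|_2\leqslant M\|h\|_2\cdot C\|h\|_2^2$, which is of cubic order. Even bounding this term crudely by $M\|h\|_2^2/2$ (using $C\|h\|_2\leqslant 2M\delta/\mu\leqslant 1/2$), and adding $\|\mathcal P_UJ'h\|_2\leqslant\tfrac M2\|h\|_2^2$, the total is $\leqslant M\|h\|_2^2$.

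\textbf{Component in $U^\perp$.} From the same identity,
\[
\mathcal P_{U^\perp}\nabla E(\theta)=\nabla^2E(\theta)\mathcal P_{U^\perp}h-\mathcal P_{U^\perp}J'h .
\]
On $U^\perp$ every eigenvalue of $\nabla^2E(\theta)$ has modulus at least $\mu$, so the first term has norm $\geqslant\mu\|\mathcal P_{U^\perp}h\|_2$. Next,
\[
\|\mathcal P_{U^\perp}h\|_2\geqslant\|h\|_2-\|\mathcal P_Uh\|_2\geqslant \|h\|_2-C\|h\|_2^2 .
\]
This gives
\[
\|\mathcal P_{U^\perp}\nabla E(\theta)\|_2\geqslant \mu\|h\|_2-\mu C\|h\|_2^2-\tfrac M2\|h\|_2^2 .
\]
With $C=2M/\mu$ we have $\mu C=2M$, so the bound becomes $(\mu-\tfrac52M\|h\|_2)\|h\|_2$. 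This matches the stated constant, which confirms this route.

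The main obstacle is choosing the right base point for the Taylor expansion. Expanding at $\theta$ rather than $\hat\theta$ lets the spectral gap of $\nabla^2E(\theta)$ act on $U$ versus $U^\perp$, with the subspace-distance lemma controlling $\mathcal P_Uh$. The only other check is that the segment stays inside the tube.
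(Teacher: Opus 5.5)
Your proof is correct and follows essentially the same route as the paper: the integral identity with the Hessian bound $L$ for $\|\nabla E(\theta)\|_2$, then an expansion based at $\theta$ (using $\nabla E(\hat\theta)=0$) combined with \Cref{lemma:dist} to get $\|\mathcal{P}_U(\theta-\hat\theta)\|_2\leqslant C\|\theta-\hat\theta\|_2^2$, which yields the same constants $\tfrac52 M$ and $M$. The differences are cosmetic. You bound $\|\nabla^2E(\theta)\mathcal{P}_U\|_2$ by $M\|\theta-\hat\theta\|_2$ and then use $C\|\theta-\hat\theta\|_2\leqslant\tfrac12$, whereas the paper uses the cruder bound $\mu/4$ together with $\tfrac{\mu}{4}C=\tfrac M2$. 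You also explicitly check that the segment from $\hat\theta$ to $\theta$ stays in the tube, which the paper leaves implicit.
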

\begin{remark}\label{rmk:unique}
From \eqref{eqn:decom}, we have 
$\|\nabla E(\theta)\|_2\geqslant
\left(\mu-\tfrac{5}{2}M\|\theta-\hat\theta\|_2\right)
\|\theta-\hat\theta\|_2
\geqslant 3\mu\|\theta-\hat\theta\|_2/8$ because of $M\delta\leqslant \mu/4$, so
any critical point in the tubular neighborhood $\mathcal{U}(\mathcal{M},\delta)$ lies in the critical manifold $\mathcal{M}$. 
\end{remark}

Our first result establishes a pointwise descent property: $k$-HiSD equipped with some accurate eigenvectors yields a decrease in the Lyapunov function $G$.
\begin{theorem}[pointwise descent]\label{thm:direction}
Under Assumption \ref{ass:1}, for $s\leqslant k\leqslant s+m$, if $\theta \in \mathcal{U}(\mathcal{M},\delta) \backslash \mathcal{M}$, then 
for any orthonormal vectors $\{w_p\}_{p=s+1}^{k}$ in $U$,
the direction 
\begin{equation}
    -\left(I-\sum_{p=1}^s 2 u_p u_p^\top
    -\sum_{p=s+1}^k 2 w_p w_p^\top
    \right) \nabla E(\theta),
\end{equation}
is a strictly decreasing-direction for $G(\theta)=\frac{1}{2}\|\nabla E(\theta)\|_2^2$.
\end{theorem}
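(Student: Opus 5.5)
The plan is to compute the directional derivative of $G$ along the given direction and show that it is strictly negative. Write $g=\nabla E(\theta)$, $H=\nabla^2E(\theta)$ and $R=I-\sum_{p=1}^s 2u_pu_p^\top-\sum_{p=s+1}^k 2w_pw_p^\top$, so the direction is $d=-Rg$. Since $\nabla G(\theta)=Hg$, the directional derivative is
\[
\nabla G(\theta)^\top d=-g^\top H R g .
\]
It therefore suffices to prove $g^\top HRg>0$ whenever $\theta\in\mathcal U(\mathcal M,\delta)\setminus\mathcal M$.

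The first step is to split $g=a+b+c$ along the three $H$-invariant, mutually orthogonal subspaces
\[
A=\operatorname{span}\{u_p\}_{p=1}^s,\qquad U=\operatorname{span}\{u_p\}_{p=s+1}^{s+m},\qquad C=\operatorname{span}\{u_p\}_{p=s+m+1}^d .
\]
Because $W=\operatorname{span}\{w_p\}\subset U$, the operator $R$ acts as $-I$ on $A$, as the reflection $I-2\mathcal P_W$ on $U$, and as $I$ on $C$. All three blocks are invariant under $H$, so the cross terms vanish and
\[
g^\top HRg=-a^\top Ha+b^\top H(I-2\mathcal P_W)b+c^\top Hc .
\]
Assumption~\ref{ass:1}(iii) then gives $-a^\top Ha\ge\mu\|a\|_2^2$ and $c^\top Hc\ge\mu\|c\|_2^2$. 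Note that $H|_U$ need not commute with $\mathcal P_W$, so the middle term is not controlled by its sign. Instead I bound it in absolute value: $I-2\mathcal P_W$ is orthogonal, and by Remark~\ref{re:asm} we have $\|H|_U\|_2\le\mu/4$. Hence
\[
g^\top HRg\ \ge\ \mu\|\mathcal P_{U^\perp}g\|_2^2-\tfrac{\mu}{4}\|\mathcal P_Ug\|_2^2 .
\]

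The main obstacle is controlling the indefinite $U$-block. This is exactly where the flexibility in choosing $w_p$ costs something, and Lemma~\ref{lemma:decom} resolves it. Set $r=\|\theta-\hat\theta\|_2>0$; it is positive because $\theta\notin\mathcal M$. Using $M\delta\le\mu/4$, Lemma~\ref{lemma:decom} gives
\[
\|\mathcal P_{U^\perp}g\|_2\ge\left(\mu-\tfrac52 Mr\right)r\ge\tfrac{3\mu}{8}r,
\qquad
\|\mathcal P_Ug\|_2\le Mr^2\le\tfrac{\mu}{4}r .
\]
Substituting these bounds,
\[
g^\top HRg\ \ge\ \tfrac{9}{64}\mu^3r^2-\tfrac{1}{64}\mu^3r^2=\tfrac{\mu^3}{8}r^2>0 .
\]
It follows that $\nabla G(\theta)^\top d\le-\tfrac{\mu^3}{8}\|\theta-\hat\theta\|_2^2<0$, which is the claimed strict decrease. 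The same quantitative bound should also be useful later for the Lyapunov argument.
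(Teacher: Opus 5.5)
Your proof is correct and takes essentially the same route as the paper. You write the directional derivative as $-g^\top HRg$, split $g$ along the spectral subspaces of $H$, use $\mu$-definiteness outside $U$ and the $\mu/4$ bound on $U$ (Remark~\ref{re:asm}), and close with \Cref{lemma:decom}. The only difference is in the $U$-block: by using its $H$-invariance and the orthogonality of $I-2\mathcal{P}_W$, you get the slightly sharper bound $\mu\|\mathcal{P}_{U^\perp}g\|_2^2-\tfrac{\mu}{4}\|\mathcal{P}_Ug\|_2^2$ in place of the paper's $\tfrac{3\mu}{4}\bigl(\|\mathcal{P}_{U^\perp}g\|_2^2-\|\mathcal{P}_Ug\|_2^2\bigr)$, and you end with $\tfrac{\mu^3}{8}r^2$ rather than the paper's $\tfrac{3\mu^2}{32}\|g\|_2 r$, which is the form the paper reuses in \Cref{thm:inaccurate}.
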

\begin{proof}
Denote $g = \nabla E(\theta)$, and it suffices to prove
\begin{equation}\label{eqn:ktheta}
K(\theta):=-g^{\top}\nabla^{2}E(\theta)\left(I-\sum_{p=1}^s 2u_p u_p^\top -\sum_{p=s+1}^k 2w_p w_p^\top \right) g< 0.
\end{equation}
Denote $W=\operatorname{span}\{w_p\}_{p=s+1}^{k}\subseteq U$ and $\mathcal{P}_{W}=\sum_{p=s+1}^k w_p w_p^\top$ is the orthogonal projection onto $W$.
From the eigenvalue decomposition $\nabla^2 E(\theta) = \sum_{p=1}^{d} \lambda_p u_p u_p^\top$ and the eigenvalue bound in Assumption \ref{ass:1}, we obtain
\begin{equation*}
\begin{aligned}
K(\theta) 
&=\sum_{p=1}^{s}\lambda_{p}(u_{p}^{\top}g)^2-
\sum_{p=s+1}^{d}\lambda_{p}(u_{p}^{\top}g)^2
+2g^{\top}\nabla^{2}E(\theta)\mathcal{P}_{W}g \\
&\leqslant -\mu 
\left(\sum_{p=1}^{s}(v_{p}^{\top} g)^2
+\sum_{p=s+m+1}^{d}(v_{p}^{\top}g)^2\right)+
\frac{\mu}{4}\left(\sum_{p=s+1}^{s+m}(v_{p}^{\top}g)^2+
2 \|g\|_2\|\mathcal{P}_{W}g\|_2 \right)\\
&\leqslant -\mu \|\mathcal{P}_{U^\perp}g\|_2^2
+ \frac{\mu}{4}
\left(\|\mathcal{P}_{U}g\|_2^2+
\|g\|_2^2 + \|\mathcal{P}_{U}g\|_2^2 \right)
= -\frac{3\mu}{4}
\left(\|\mathcal{P}_{U^\perp}g\|_2^2 
- \|\mathcal{P}_{U}g\|_2^2\right).
\end{aligned}
\end{equation*}
By applying \Cref{lemma:decom}, we have
\begin{equation}
\|\mathcal{P}_{U^\perp}g\|_2 \geqslant
\left(\mu-\tfrac{5}{2}M\|\theta-\hat\theta\|_2\right)
\|\theta-\hat\theta\|_2,
\quad
\|\mathcal{P}_{U}g\|_2 \leqslant
M\|\theta-\hat\theta\|_2^2.
\end{equation}
Therefore, from $M\|\theta-\hat\theta\|_2<M\delta\leqslant \mu/4$, we have
\begin{equation}\label{eqn:kest}
\begin{aligned}
K(\theta) & = -\frac{3\mu}{4}
\left(\|\mathcal{P}_{U^\perp}g\|_2 + \|\mathcal{P}_{U}g\|_2\right)
\left(\|\mathcal{P}_{U^\perp}g\|_2 - \|\mathcal{P}_{U}g\|_2\right) \\
&\leqslant -\frac{3\mu}{4}\|g\|_2 
\left(\mu-\frac{7}{2}M\|\theta-\hat\theta\|_2\right)
\|\theta-\hat\theta\|_2 
\leqslant -\frac{3\mu^2}{32}\|g\|_2 
\|\theta-\hat\theta\|_2 <0,
\end{aligned}
\end{equation}
which completes the proof. 
\end{proof}

\begin{remark}
Because of the possible presence of multiple eigenvalues of $\nabla^2 E(\theta)$, the choice of eigenvector direction may not be unique, so the trajectory of $k$-HiSD with accurate eigenvectors may not be well defined. 
For $\theta \in \mathcal{U}(\mathcal{M},\delta)$, Remark \ref{re:asm} shows that the eigenvalues $\{\lambda_p\}_{p=s+1}^{s+m}$ are close to zero, and \Cref{lemma:decom} indicates that the orthogonal projection of $\nabla E(\theta)$ onto $U$ is sufficiently small. 
Consequently, in \Cref{thm:direction}, we do not explicitly require the vectors $w_p$ as eigenvectors, but instead assume that they are orthonormal vectors within the subspace $U$. 
\end{remark}

The following result establishes the local asymptotic stability and convergence of $k$-HiSD \eqref{eqn:hisd}, provided that the first $s$ unstable directions are sufficiently accurate while the other $(k-s)$ unstable directions are allowed to vary near the subspace $U(t)$.
Here, we consider the trajectory $\theta$ depending on $t$, and maintain the previous notations for $\lambda_p$, $u_p$, $U$, which also depend on $t$.
\begin{theorem}[local asymptotic stability and trajectory convergence]\label{thm:inaccurate}
Under Assumption \ref{ass:1}, for $s\leqslant k\leqslant s+m$, assume that the trajectory $\theta(t)$ generated by the $k$-HiSD \eqref{eqn:hisd} remains within $\mathcal{U}(\mathcal{M},\delta)$ for all $t\geqslant 0$. 
If there exists a constant $\alpha< \frac{3\mu^2}{64L^2}$ such that for any $t \geqslant 0$, the unstable directions $v_p(t)$ in $k$-HiSD satisfy
\begin{equation}
\left\| \sum_{p=1}^k v_p(t)v_p(t)^\top  
- \sum_{p=1}^s u_p(t)u_p(t)^\top
- \sum_{p=s+1}^k w_p(t)w_p(t)^\top
\right\|_2 < \alpha, 
\end{equation}
where $\{w_p{(t)}\}_{p=s+1}^{k}$ are some orthonormal vectors in $U(t)$, 
then $\mathcal{M}$ is asymptotically stable and the trajectory $\theta(t)$ converges to an index-$s$ saddle point in $\mathcal{M}$.
\end{theorem}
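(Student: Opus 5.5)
The plan is to use $G(\theta)=\tfrac12\|\nabla E(\theta)\|_2^2$ as a Lyapunov function along the trajectory. Write $g=\nabla E(\theta(t))$, $P_v=\sum_{p=1}^k v_pv_p^\top$ and $P_*=\sum_{p=1}^s u_pu_p^\top+\sum_{p=s+1}^k w_pw_p^\top$, so that $\|P_v-P_*\|_2<\alpha$. Along \eqref{eqn:hisd},
\begin{equation*}
\frac{\mathrm{d}}{\mathrm{d}t}G(\theta(t))=-\beta g^\top\nabla^2E(\theta)(I-2P_*)g-2\beta g^\top\nabla^2E(\theta)(P_*-P_v)g .
\end{equation*}
The first term is $\beta K(\theta)$ from the proof of \Cref{thm:direction}. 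By \eqref{eqn:kest} it is at most $-\tfrac{3\mu^2}{32}\beta\|g\|_2\|\theta-\hat\theta\|_2$. The perturbation term is bounded by $2\beta L\alpha\|g\|_2^2\le 2\beta L^2\alpha\|g\|_2\|\theta-\hat\theta\|_2$, using $\|\nabla^2E\|_2\le L$ (Assumption \ref{ass:1}(iii) together with Remark \ref{re:asm}) and $\|g\|_2\le L\|\theta-\hat\theta\|_2$ from \Cref{lemma:decom}.

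Since $\alpha<3\mu^2/(64L^2)$, we have $2L^2\alpha<3\mu^2/32$. Hence
\begin{equation*}
\dot G\le -c\beta\|g\|_2\|\theta-\hat\theta\|_2, \qquad c=\tfrac{3\mu^2}{32}-2L^2\alpha>0.
\end{equation*}
By Remark \ref{rmk:unique}, $\|g\|_2\ge \tfrac{3\mu}{8}\|\theta-\hat\theta\|_2$, and also $\|\theta-\hat\theta\|_2\ge\|g\|_2/L$. This gives $\dot G\le -\tfrac{2c\beta}{L}G$, so $G$ decays exponentially, $G(t)\le G(0)e^{-2c\beta t/L}$. Consequently $\operatorname{dist}(\theta(t),\mathcal M)\le \tfrac{8}{3\mu}\|g(t)\|_2$ also decays exponentially. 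Asymptotic stability of $\mathcal M$ follows: Lyapunov stability comes from the sandwich $\tfrac{3\mu}{8}\operatorname{dist}\le\|g\|_2\le L\operatorname{dist}$ together with the monotonicity of $G$, and attraction comes from the exponential decay.

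The main obstacle is showing convergence to a single point rather than drift along the manifold, since $\mathcal M$ is a continuum. I would handle it with a finite-length argument: $\|\dot\theta\|_2=\beta\|(I-2P_v)g\|_2=\beta\|g\|_2$ because $I-2P_v$ is orthogonal. Thus
\begin{equation*}
\int_0^\infty\|\dot\theta\|_2\,\mathrm{d}t\le \beta\sqrt{2G(0)}\int_0^\infty e^{-c\beta t/L}\,\mathrm{d}t<\infty .
\end{equation*}
A cleaner alternative is $\frac{\mathrm{d}}{\mathrm{d}t}\sqrt{G}\le -\tfrac{c\beta}{\sqrt2}\|\theta-\hat\theta\|_2\le -\tfrac{c}{\sqrt2 L}\|\dot\theta\|_2$, a Łojasiewicz-type inequality that bounds the length directly by $\sqrt{G(0)}$. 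In either case $\theta(t)$ is Cauchy and converges to some $\theta^\infty$. Because $\operatorname{dist}(\theta(t),\mathcal M)\to0$ and $\mathcal M$ is closed, $\theta^\infty\in\mathcal M$, so $\theta^\infty$ is an index-$s$ saddle point. The length bound also shows that a trajectory starting close to $\mathcal M$ stays close to its initial projection, which supplies the stability statement in the usual sense.
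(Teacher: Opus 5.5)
Your proposal is correct and follows the paper's proof essentially step for step. It uses $G=\tfrac12\|\nabla E(\theta)\|_2^2$ as the Lyapunov function, reuses \eqref{eqn:kest} for the exact-direction part, bounds the perturbation by $2L^2\alpha\|\nabla E(\theta)\|_2\|\theta-\hat\theta\|_2$, gets exponential decay of $G$, and derives finite arc length from $\|\dot\theta\|_2=\beta\|\nabla E(\theta)\|_2$. The differences are cosmetic: you keep track of $\beta$, check stability directly from the two-sided bound instead of citing \cite[Theorem 4.18]{bhatia2002}, place the limit in $\mathcal M$ by closedness rather than Remark~\ref{rmk:unique}, and add an optional \L{}ojasiewicz-type length bound.
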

\begin{proof}
According to \cite[Theorem 4.18]{bhatia2002}, to prove $\mathcal{M}$ is asymptotically stable, it suffices to prove that $G(\theta)=\frac{1}{2}\|\nabla E(\theta)\|_2^2$ serves as a strict Lyapunov function for the $k$-HiSD \eqref{eqn:hisd}:
\begin{enumerate}
\item [(i)] For $\vartheta\in \mathcal{M}$, $G(\vartheta) = 0$. 
 For $\vartheta \in \mathcal{U}(\mathcal{M},\delta) \backslash \mathcal{M}$, $G(\vartheta) > 0$. 
\item [(ii)] There exist strictly increasing functions $a(x), b(x), a(0)= b(0)=0$, defined for $x \geqslant 0$, such that
$a(\|\vartheta-\hat\vartheta\|_2) \leqslant G(\vartheta) \leqslant b(\|\vartheta-\hat\vartheta\|_2)$ for $\vartheta \in \mathcal{U}(\mathcal{M},\delta)$.
\item [(iii)] If $\theta(0) \notin \mathcal{M}$, then along the $k$-HiSD trajectory we have $\frac{\mathrm{d}}{\mathrm{d}t}G(\theta(t)) <0$ and $\lim_{t\to \infty} G(\theta(t)) =0$.
\end{enumerate} 
We have $\|\nabla E(\vartheta)\|_2\geqslant 3\mu\|\vartheta-\hat\vartheta\|_2/8$ from Remark \ref{rmk:unique}, and $\|\nabla E(\vartheta)\|_2 \leqslant L\|\vartheta-\hat\vartheta\|_2$ from \Cref{lemma:decom}, so conditions (i) and (ii) hold for $a(x)=9\mu^2x^2/128$ and $b(x)=L^2x^2/2$.

Next, to show (iii), we omit the explicit dependence on $t$ for simplicity in the following derivations.
Denote $g = \nabla E(\theta)$, $K(\theta)$ as \eqref{eqn:ktheta}, and 
\begin{equation}
R=\sum_{p=1}^k v_pv_p^\top  
- \sum_{p=1}^s u_pu_p^\top
- \sum_{p=s+1}^k w_pw_p^\top.    
\end{equation}
From the $\theta$-dynamics in $k$-HiSD,
\begin{equation}
\dfrac{\mathrm{d}\theta}{\mathrm{d}t} 
=-\left(I-\sum_{p=1}^k 2v_p v_p^\top\right)g
=-\left(I-\sum_{p=1}^s 2u_p u_p^\top- \sum_{p=s+1}^k 2w_p w_p^\top-2 R\right)g,
\end{equation}
we define a constant $\eta=\frac{3\mu^2}{32L}-2L\alpha>0$ and obtain
\begin{equation*}
\label{eqn:lyp}
\begin{aligned}
\dfrac{\mathrm{d}}{\mathrm{d}t}G(\theta)&=
g^{\top}\nabla^{2}E(\theta)\dfrac{\mathrm{d}\theta}{\mathrm{d}t}=K(\theta)+2g^{\top}\nabla^{2}E(\theta)Rg
\leqslant 
K(\theta)+2\|g\|_2^2
\left\|\nabla^{2}E(\theta)\right\|_2 \|R\|_2
\\&
\leqslant 
-\left(\frac{3\mu^2}{32}-2L^2\alpha\right)\|g\|_2\|\theta-\hat\theta\|_2
\leqslant
-2\eta G(\theta),
\end{aligned}
\end{equation*}
where we apply \eqref{eqn:kest}, 
$\|\nabla^{2}E(\theta)\|_2\leqslant L$,
and $\|g\|_2\leqslant L\|\theta-\hat\theta\|_2$ from \Cref{lemma:decom}.

From the Gronwall's inequality, we directly obtain 
$G(\theta(t))\leqslant G(\theta(0))\mathrm{e}^{-2\eta t}$,
that is, 
$\|\nabla E(\theta(t))\|_2\leqslant C_0 \mathrm{e}^{-\eta t}$, 
where $C_0=\|\nabla E(\theta(0))\|_2>0$.
Therefore, we have $\lim\limits_{t\to \infty} G (\theta(t))=0$, which completes the proof of (iii).

Finally, from $\left\|\frac{\mathrm{d}\theta}{\mathrm{d}t}(t)\right\|_2 = \|\nabla E(\theta(t))\|_2$, for any $t\geqslant0$, $\tau>0$, we have
\begin{equation*}
\| \theta(t+\tau) - \theta(t)\|_2 = 
\left\|\int_t^{t+\tau} \frac{\mathrm{d}\theta}{\mathrm{d}t}(s) \mathrm{d}s\right\|_2
\leqslant
\int_t^{+\infty}C_0\mathrm{e}^{-\eta s}\mathrm{d}s
=\frac{C_0}{\eta}\mathrm{e}^{-\eta t},
\end{equation*}
which means that $\{\theta(t)\}_{t\geqslant0}$ is a Cauchy function as $t\to +\infty$, so $\lim\limits_{t\to \infty} \theta(t)$ exists, 
From $\lim\limits_{t\to \infty} \|\nabla E(\theta(t))\|_2=0$ and Remark \ref{rmk:unique}, we have $\lim\limits_{t\to \infty} \theta(t)$ is a critical point in $\mathcal{M}$. 
\end{proof}

\subsection{Local convergence of discrete HiSD algorithms}
Under the assumption of nondegeneracy, 
it was theoretically established in \cite{luo2022convergence} that the explicit Euler scheme of the HiSD method with fixed step sizes exhibits a linear convergence rate in a local domain.
For degenerate saddle points, the primary challenge in establishing local convergence lies in their inherent degeneracy, which prevents the direct application of the techniques developed in \cite{luo2022convergence} and thus necessitates a tailored analysis. 
To address this, we extend the numerical analysis in \cite{luo2022convergence} by showing that, under suitable assumptions, the algorithm can converge to the saddle manifold at a linear convergence rate. 

First, we introduce a characterization of $\theta^{(t)}-\hat{\theta}^{(t)}$, the proof of which is very similar to \cite[Lemma 3.2]{luo2022convergence}.
\begin{lemma}[error of one-step iteration]
\label{lemma:one_step_estimate}
For an iteration scheme 
$\theta^{(t+1)} = \theta^{(t)} - \beta_t A^{(t)}\nabla E(\theta^{(t)})$ 
with $A^{(t)}\in\mathbb{R}^{n\times n}$, we have the following identity,
\begin{equation}
\label{eqn:recursion}
\begin{aligned}
&\theta^{(t+1)}-\hat{\theta}^{(t)} = 
\left[Q^{(t)} + B^{(t)}\right] (\theta^{(t)}-\hat{\theta}^{(t)})\\
\text{where }
&Q^{(t)} = I-\beta_t  A^{(t)}\nabla^2 E(\theta^{(t)}),\\
&B^{(t)} = \beta_t A^{(t)}\left[ \nabla^2 E(\theta^{(t)})
- \int_0^1 \nabla^2 E\left(\hat{\theta}^{(t)}+t(\theta^{(t)}-\hat{\theta}^{(t)})\right) \mathrm{d}t\right].
\end{aligned}
\end{equation}
Furthermore, if $\theta^{(t)}\in \mathcal{U}(\mathcal{M,\delta})$, then we have
$\|B^{(t)}\|_2\leqslant
\frac{1}{2}\beta_tM\|A^{(t)}\|_2\|\theta^{(t)}-\hat{\theta}^{(t)}\|_2$.
\end{lemma}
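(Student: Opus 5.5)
We prove the identity first and then the bound on $B^{(t)}$. The identity needs no hypothesis beyond $E\in\mathcal{C}^2$, and the bound follows from the Lipschitz estimate in Assumption~\ref{ass:1}(ii).

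\textbf{The identity.} Since $\hat{\theta}^{(t)}\in\mathcal{M}$, we have $\nabla E(\hat{\theta}^{(t)})=0$. Apply \Cref{lemma:expansion} with base point $\hat{\theta}^{(t)}$ and endpoint $\theta^{(t)}$, or equivalently use the fundamental theorem of calculus along the segment:
\begin{equation*}
\nabla E(\theta^{(t)}) = \nabla E(\theta^{(t)})-\nabla E(\hat{\theta}^{(t)}) = \int_0^1 \nabla^2 E\left(\hat{\theta}^{(t)}+\tau(\theta^{(t)}-\hat{\theta}^{(t)})\right)\mathrm{d}\tau\,(\theta^{(t)}-\hat{\theta}^{(t)}).
\end{equation*}
Denote the averaged Hessian by $H^{(t)}$. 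Subtract $\hat{\theta}^{(t)}$ from both sides of the iteration to get
\begin{equation*}
\theta^{(t+1)}-\hat{\theta}^{(t)} = \left(I-\beta_t A^{(t)} H^{(t)}\right)(\theta^{(t)}-\hat{\theta}^{(t)}).
\end{equation*}
Then add and subtract $\beta_t A^{(t)}\nabla^2E(\theta^{(t)})$, which gives $I-\beta_tA^{(t)}H^{(t)} = Q^{(t)}+B^{(t)}$ exactly as defined in \eqref{eqn:recursion}. This step is purely algebraic.

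\textbf{The bound on $B^{(t)}$.} Submultiplicativity gives
\begin{equation*}
\|B^{(t)}\|_2\leqslant \beta_t\|A^{(t)}\|_2\,\|\nabla^2E(\theta^{(t)})-H^{(t)}\|_2 .
\end{equation*}
Move the norm inside the integral:
\begin{equation*}
\|\nabla^2E(\theta^{(t)})-H^{(t)}\|_2\leqslant\int_0^1\left\|\nabla^2E(\theta^{(t)})-\nabla^2E\left(\hat{\theta}^{(t)}+\tau(\theta^{(t)}-\hat{\theta}^{(t)})\right)\right\|_2\mathrm{d}\tau .
\end{equation*}
The two points in the integrand differ by $(1-\tau)(\theta^{(t)}-\hat{\theta}^{(t)})$. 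If Assumption~\ref{ass:1}(ii) applies, the integrand is at most $M(1-\tau)\|\theta^{(t)}-\hat{\theta}^{(t)}\|_2$. Integrating gives the factor $\tfrac12$, and hence the claimed bound $\tfrac12\beta_tM\|A^{(t)}\|_2\|\theta^{(t)}-\hat{\theta}^{(t)}\|_2$.

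\textbf{Main obstacle.} The only delicate point is to justify that every point on the segment $\theta_\tau=\hat{\theta}^{(t)}+\tau(\theta^{(t)}-\hat{\theta}^{(t)})$ lies in $\mathcal{U}(\mathcal{M},\delta)$, so that the Lipschitz condition may be used. We plan to argue as follows. Since $\hat{\theta}^{(t)}\in\mathcal{M}$,
\begin{equation*}
\operatorname{dist}(\theta_\tau,\mathcal{M})\leqslant\|\theta_\tau-\hat{\theta}^{(t)}\|_2=\tau\|\theta^{(t)}-\hat{\theta}^{(t)}\|_2<\delta .
\end{equation*}
This gives $\|\theta_\tau-\hat{\theta}_\tau\|_2<\delta$, because in the tube the nearest-point projection realizes the distance, and the tube's defining condition $\|\theta-\hat\theta\|_2<\delta$ is exactly $\operatorname{dist}(\theta,\mathcal{M})<\delta$. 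Hence the whole segment lies in the neighborhood and Assumption~\ref{ass:1}(ii) applies at each point. The same reasoning is what underlies the bound on $\|J\|_2$ in \Cref{lemma:expansion}, so we can cite it directly.
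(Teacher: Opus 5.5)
Your proof is correct and is the argument the paper intends; the paper omits its own proof and defers to the analogous lemma in its nondegenerate analysis. Use $\nabla E(\hat\theta^{(t)})=0$ to write $\nabla E(\theta^{(t)})$ as the averaged Hessian applied to $\theta^{(t)}-\hat\theta^{(t)}$, add and subtract $\beta_tA^{(t)}\nabla^2E(\theta^{(t)})$, and bound the remainder through the Lipschitz condition with $\int_0^1(1-\tau)\,\mathrm{d}\tau=\tfrac12$. You also check that the segment from $\hat\theta^{(t)}$ to $\theta^{(t)}$ stays in $\mathcal{U}(\mathcal{M},\delta)$; the paper leaves this implicit, and it is genuinely needed because the tube need not be convex.
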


Specially, for the following explicit Euler scheme of $k$-HiSD:
\begin{equation}\label{exact_position_k}
\theta^{(t+1)}=\theta^{(t)}-\beta_t
\left(I-2\sum_{p=1}^kv_p^{(t)}{v_p^{(t)}}^{\top}\right)
\nabla E(\theta^{(t)}),
\end{equation}
with ${{v_p^{(t)\top}}}{v_q^{(t)}} = \delta_{pq}$, 
denote the eigenvalues and the corresponding orthonormal eigenvectors of $\nabla^2 E(\theta^{(t)})$ and $\nabla^2 E(\hat{\theta}^{(t)})$ by $\left(\lambda_p^{(t)}, u_p^{(t)}\right)_{p=1}^d$ and $\left(\hat{\lambda}_p^{(t)}, \hat{u}_p^{(t)}\right)_{p=1}^d$ arranged in ascending order, respectively. 
For $\theta^{(t)} \in \mathcal{U}(\mathcal{M},\delta)$, we can further define $U^{(t)} = \operatorname{span}\{u_p^{(t)}\}_{p=s+1}^{s+m}$ and 
$\widehat{U}^{(t)} = \operatorname{span}\{\hat{u}_p^{(t)}\}_{p=s+1}^{s+m}$.
Now we present the following single-step numerical analysis with the accurate eigenvectors.
\begin{theorem}[error estimate in one-step iteration]
\label{thm:one_step}
Under Assumption \ref{ass:1}, assume that the index of $k$-HiSD is chosen as $s\leqslant k\leqslant s+m$, $\theta^{(t)} \in \mathcal{U}(\mathcal{M},\delta)$, and $\beta_t \leqslant 2/(L+\mu)$.
If $v_p^{(t)}=u_p^{(t)}$ for $1\leqslant p\leqslant s$ 
and $v_p^{(t)} \in U^{(t)}$ for $s+1\leqslant p\leqslant k$, 
then for $r_t = \|\theta^{(t)}-\hat{\theta}^{(t)}\|_2$, the following estimate holds:
\begin{equation}\label{equ:one_step_estimate}
r_{t+1}\leqslant
\left(1-\mu\beta_t\right) r_t + \left(C 
+ \frac{5M\beta_t}{2}\right) r_t^2.
\end{equation}
\end{theorem}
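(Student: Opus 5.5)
The plan is to compare $\theta^{(t+1)}$ with the old projection $\hat\theta^{(t)}$ rather than with $\hat\theta^{(t+1)}$. Since $\hat\theta^{(t)}\in\mathcal{M}$, the definition \eqref{eqn:projectionm} of the nearest-point projection gives
\[
r_{t+1}\leqslant\|\theta^{(t+1)}-\hat\theta^{(t)}\|_2 .
\]
It therefore suffices to bound the right-hand side using \Cref{lemma:one_step_estimate} with $A^{(t)}=I-2\sum_{p=1}^k v_p^{(t)}v_p^{(t)\top}$. This matrix is a Householder-type reflection, so $\|A^{(t)}\|_2=1$. Write $e=\theta^{(t)}-\hat\theta^{(t)}$ and $H=\nabla^2E(\theta^{(t)})$. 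Then
\[
\|\theta^{(t+1)}-\hat\theta^{(t)}\|_2\leqslant\|Q^{(t)}e\|_2+\tfrac{1}{2}\beta_tM r_t^2 .
\]
It remains to estimate $Q^{(t)}e=e-\beta_tA^{(t)}He$. I will split $e=\mathcal{P}_{U^{(t)\perp}}e+\mathcal{P}_{U^{(t)}}e$.

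\textbf{Normal component.} Let $W=\operatorname{span}\{v_p^{(t)}\}_{p=s+1}^k\subseteq U^{(t)}$. Since $v_p^{(t)}=u_p^{(t)}$ for $p\leqslant s$, we can write
\[
A^{(t)}H=H-2\sum_{p\le s}\lambda_p u_pu_p^\top-2\mathcal{P}_WH .
\]
Both $U^{(t)}$ and $U^{(t)\perp}$ are invariant under $H$, and $\mathcal{P}_W$ annihilates $U^{(t)\perp}$. Hence on $U^{(t)\perp}$ the operator $A^{(t)}H$ is diagonal in $\{u_p\}_{p\le s}\cup\{u_p\}_{p>s+m}$ with eigenvalues $|\lambda_p|\in[\mu,L]$ by Assumption \ref{ass:1}(iii). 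Because $\beta_t\leqslant 2/(L+\mu)$, each factor satisfies $|1-\beta_t|\lambda_p||\leqslant 1-\beta_t\mu$. This gives
\[
\|Q^{(t)}\mathcal{P}_{U^{(t)\perp}}e\|_2\leqslant(1-\mu\beta_t)r_t .
\]
The operator $Q^{(t)}$ also maps $U^{(t)}$ into itself, so the two pieces stay orthogonal, though the triangle inequality is enough here.

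\textbf{Tangential component.} This is the step that produces the quadratic term. By \eqref{eqn:proj}, $e\perp\widehat{U}^{(t)}$. Therefore
\[
\mathcal{P}_{U^{(t)}}e=(\mathcal{P}_{U^{(t)}}-\mathcal{P}_{\widehat U^{(t)}})e,
\]
and \Cref{lemma:dist} yields $\|\mathcal{P}_{U^{(t)}}e\|_2\leqslant Cr_t^2$. Then
\[
\|Q^{(t)}\mathcal{P}_{U^{(t)}}e\|_2\leqslant\|\mathcal{P}_{U^{(t)}}e\|_2+\beta_t\|H\mathcal{P}_{U^{(t)}}e\|_2 .
\]
On $U^{(t)}$, the Hessian has eigenvalues bounded by $Mr_t$, by the Weyl argument of Remark \ref{re:asm} applied with $r_t$ in place of $\delta$. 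The second term is thus at most $\beta_tMCr_t^3$. Using $Cr_t\leqslant 2M\delta/\mu\leqslant 1/2$, this is at most $\tfrac12\beta_tMr_t^2$.

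\textbf{Collecting terms.} Adding the three bounds gives
\[
r_{t+1}\leqslant(1-\mu\beta_t)r_t+(C+M\beta_t)r_t^2,
\]
which is stronger than \eqref{equ:one_step_estimate}. The main point to check carefully is the tangential step: the bound there relies on $e$ having essentially no component in $U^{(t)}$, which follows only from the projection property \eqref{eqn:proj} combined with \Cref{lemma:dist}. The inexact choice of $v_p\in U^{(t)}$ for $p>s$ enters only through $\mathcal{P}_W$, which is harmless because it acts within $U^{(t)}$.
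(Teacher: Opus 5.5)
Your proposal is correct and follows the paper's proof essentially step for step. You compare $\theta^{(t+1)}$ with $\hat\theta^{(t)}$, apply \Cref{lemma:one_step_estimate} with the reflection $A^{(t)}$, split $\theta^{(t)}-\hat\theta^{(t)}$ into its $U^{(t)\perp}$ and $U^{(t)}$ parts, contract the former by $1-\mu\beta_t$, and control the latter by $Cr_t^2$ via \eqref{eqn:proj} and \Cref{lemma:dist}.

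The only difference is in the tangential term. The paper bounds it crudely by $\|Q\|_2\|g_2\|_2\leqslant(1+\mu\beta_t)Cr_t^2$. You instead use the small Hessian eigenvalues on $U^{(t)}$ to get $(C+\tfrac12 M\beta_t)r_t^2$. This yields the sharper constant $C+M\beta_t$, which of course implies \eqref{equ:one_step_estimate}.
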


\begin{proof}
By (\ref{exact_position_k}), we choose $A^{(t)} = I-2\sum_{p=1}^k v_p^{(t)}{v_{p}^{(t)\top}}$ and apply \Cref{lemma:one_step_estimate} with $\|A\|_{2} = 1$ to get 
\begin{equation}\label{eqn:qbdecom}
\theta^{(t+1)}-\hat{\theta}^{(t)} = 
\left[Q^{(t)} + B^{(t)}\right] 
\left(\theta^{(t)}-\hat{\theta}^{(t)}\right),
\quad
\left\|B^{(t)}\right\|_2\leqslant
\frac{1}{2}M\beta_tr_t.
\end{equation}
With the triangular inequality, we ascertain that
\begin{equation}\label{eqn:rbound1}
\left\|\theta^{(t+1)}-\hat{\theta}^{(t)}\right\|_2\leqslant
\left\|Q^{(t)}  \left(\theta^{(t)}-\hat{\theta}^{(t)}\right)\right\|_2 
+ \frac{M}{2}\beta_tr_t^2,
\end{equation}
and our next step is to estimate $\left\|Q^{(t)}\left(\theta^{(t)}-\hat{\theta}^{(t)}\right)\right\|_2$.

Below, we drop all the subscript $(t)$ for simplicity, and consider the decomposition of $\theta-\hat{\theta}$ as $\theta-\hat{\theta}=g_1+g_2$, 
where $g_1=\mathcal{P}_{U^\perp}(\theta-\hat{\theta})$ 
and $g_2=\mathcal{P}_{U}(\theta-\hat{\theta})$. 
From \eqref{eqn:proj} and the nondegeneracy of the critical manifold $\mathcal{M}$, we have $\theta-{\hat{\theta}}
\in {\widehat{U}}^\perp$, so $\|g_2\|_2$ can be estimated from \Cref{lemma:dist} as follows:
\begin{equation}
\|g_2\|_2
=\|\mathcal{P}_{U}(\theta-\hat{\theta})-\mathcal{P}_{\widehat{U}}(\theta-\hat{\theta})\|_2
\leqslant \|\theta-\hat{\theta}\|_2 \cdot
\operatorname{dist}(U, \widehat{U})
\leqslant C\|\theta-\hat{\theta}\|_2^2.
\label{eqn:residual}
\end{equation}

From the eigenvalue decomposition of $\nabla^2 E(\theta)= \sum_{p=1}^{d} \lambda_pu_pu_p^{\top}$, 
$Q$ in \eqref{eqn:qbdecom} can be represented as: 
\begin{equation}
\begin{aligned}
Q &= 
I-\beta_t \left(I-2\sum_{p=1}^k v_pv_p^{\top}\right)\left(\sum_{p=1}^{d} \lambda_pu_pu_p^{\top}\right)\\
&=
\underbrace{\sum_{p=1}^s(1+\beta_t\lambda_p)u_pu_p^{\top}
+ \sum_{p=s+m+1}^{d}(1-\beta_t\lambda_p)u_pu_p^{\top}
}_{Q_1}
+
\underbrace{
\sum_{p=s+1}^{s+m}(1-\beta_t\lambda_p)u_pu_p^{\top}
}_{Q_2}
\\&
\quad+
\underbrace{2\beta_t
\left(\sum_{p=s+1}^k v_pv_p^{\top}\right)
\left(\sum_{p=s+1}^{s+m} \lambda_pu_pu_p^{\top}\right)}_{Q_3}.
\label{eqn:Q(t)}
\end{aligned}
\end{equation}
Because of $\theta \in \mathcal{U}(\mathcal{M},\delta)$ and Assumption \ref{ass:1}, we have
\begin{equation*}
-{\lambda_p}\in[\mu,L],\; \text{for }
1\leqslant p\leqslant s;\quad
{\lambda_p}\in[\mu,L],\; \text{for }
s+m+1\leqslant p\leqslant d,
\end{equation*}
and from Remark \ref{re:asm}, we have $|{\lambda_p}|\leqslant \mu/4$ for $s+1\leqslant p\leqslant s+m$.
Consequently, it follows from the expression of $Q$ in \eqref{eqn:Q(t)} that 
\begin{equation}
\begin{aligned}
&\|Q\mathcal{P}_{U^\perp}\|_{2}
=\left\|Q_1\right\|_2
\leqslant 1-\mu\beta_t,\\
&\|Q\|_{2}\leqslant
\|Q_1 + Q_2\|_2 + \|Q_3\|_2 
\leqslant
1+\dfrac{\beta_t\mu}{4} + 2\beta_t \cdot \dfrac{\mu}{4}
<
1+\mu\beta_t.
\end{aligned}
\end{equation}
Building upon the observations set forth, we now stand ready to furnish a conclusive estimate of $\|Q(\theta-\hat{\theta})\|_{2}$:
\begin{equation}\label{eqn:qrbound}
\begin{aligned}
&\|Q(\theta-\hat{\theta})\|_{2}\leqslant
\|Qg_1\|_{2}+ \|Q\|_{2}\|g_2\|_{2} = \|Q\mathcal{P}_{U^\perp}(\theta-\hat{\theta})\|_{2}+\|Q\|_{2}\|g_2\|_{2} \\
&\;\;\leqslant \|Q\mathcal{P}_{U^\perp}\|_{2}\|\theta-\hat{\theta}\|_{2}+\|Q\|_{2}\|g_2\|_{2}
\leqslant\left(1-\mu\beta_t\right)r_t+
\left(1+\mu\beta_t\right)Cr_t^2.
\end{aligned}
\end{equation}
Combining \eqref{eqn:rbound1} and \eqref{eqn:qrbound}, we finally obtain
\begin{equation}\label{eqn:rt+1}
r_{t+1} \leqslant
\left\|\theta^{(t+1)}-\hat{\theta}^{(t)}\right\|_2
\leqslant
\left(1-\mu\beta_t\right)r_t+
\left(1+\mu\beta_t\right)Cr_t^2 + 
\frac{M}{2}\beta_tr_t^2,
\end{equation}
which concludes the proof with $C = 2M/\mu$.
\end{proof}

Finally, to establish a linear convergence rate from \Cref{thm:one_step}, we introduce a lemma from \cite[Lemma 3.2]{luo2022convergence}.
\begin{lemma}
Let $\{r_t\}_{t\geqslant 0}$ be a non-negative series satisfying
\begin{equation*}
r_{t+1}\leqslant (1-q)r_t + cr_t^2,\quad t\geq 0,~~q\in(0,1),~~c>0.
\end{equation*}
\begin{enumerate}
\item[(a)]  If $\displaystyle r_{t}<\frac{q}{c}$ for some $t\geqslant 0$, then 
$\displaystyle r_{t+1}<r_t<\frac{q}{c};$
\item[(b)] If $\displaystyle r_{0}<\frac{q}{c}$, then 
$\displaystyle r_{t+1}\leqslant \left(\frac{1}{1+q}\right)^{t+1}\frac{qr_0}{q-cr_0}$ for all $t\geqslant 0$.
\end{enumerate}
\label{lemma:recursion}
\end{lemma}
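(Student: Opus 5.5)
For part (a), I will use the hypothesis directly. If $0<r_t<q/c$, then $cr_t<q$, so
\[
r_{t+1}\leqslant (1-q)r_t+cr_t^2=r_t\bigl(1-(q-cr_t)\bigr)<r_t<\frac{q}{c}.
\]
The degenerate case $r_t=0$ needs a remark, since then $r_{t+1}\leqslant 0$ and the strict inequality reads $0<0$. I will either interpret (a) for $r_t>0$ or note that the sequence is identically zero from then on. Induction then shows that $r_t<q/c$ for all $t$ once it holds at $t=0$, and the sequence is nonincreasing.

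For part (b), I plan to linearize the quadratic recursion through reciprocals, in the standard way for logistic-type inequalities. Assume $r_t>0$ for all $t$; otherwise the bound is trivial from the first zero on. Write $r_{t+1}\leqslant r_t(1-q+cr_t)$, where $1-q+cr_t\in(0,1)$ by part (a) and because $q<1$. I want a lower bound for $1/r_{t+1}$. The cleanest route is to show
\[
\frac{1}{r_{t+1}}\geqslant \frac{1}{r_t(1-q+cr_t)}\geqslant \frac{1+q}{r_t}-\frac{c(1+q)}{q}.
\]
The second inequality needs care. Setting $x=cr_t\in[0,q)$, it reduces to checking $\frac{1}{1-q+x}\geqslant (1+q)\bigl(1-\frac{x}{q}\bigr)$. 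The right side is linear in $x$ and the left side is convex and decreasing. At $x=0$ we have $\frac{1}{1-q}\geqslant 1+q$. At $x=q$ the left side is $1$ and the right side is $0$. By convexity of the left side this endpoint comparison is not enough, so I will instead clear denominators. The condition becomes $q\geqslant (1+q)(q-x)(1-q+x)$. The right side is a concave quadratic in $x$ on $[0,q]$, with maximum value $(1+q)/4$ when the vertex $x=q-\frac12$ lies in range. That exceeds $q$ for small $q$, so this naive constant fails. This is the main obstacle, and I will adjust the affine comparison.

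A better comparison is the substitution $s_t=\frac{r_t}{q-cr_t}$, which is suggested by the target form $\frac{qr_0}{q-cr_0}=q\,s_0$. I will aim to prove $s_{t+1}\leqslant s_t/(1+q)$, which gives
\[
r_{t+1}\leqslant \frac{q\,r_{t+1}}{q-cr_{t+1}}=q\,s_{t+1}\leqslant q\,s_0(1+q)^{-(t+1)},
\]
exactly the claim. The first inequality uses $0<q-cr_{t+1}\leqslant q$.

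Since $u\mapsto \frac{u}{q-cu}$ is increasing on $[0,q/c)$, it suffices to plug in the upper bound $\rho=r_t(1-q+cr_t)$. With $x=cr_t$ the required inequality becomes
\[
\frac{r_t(1-q+x)}{q-x(1-q+x)}\leqslant \frac{r_t}{(1+q)(q-x)}.
\]
That is, $(1+q)(q-x)(1-q+x)\leqslant q-x+qx-x^2$. The right side equals $(q-x)(1+x)$, so dividing by $q-x>0$ it reduces to
\[
(1+q)(1-q+x)\leqslant 1+x,
\]
which simplifies to $-q^2+qx\leqslant 0$, true since $x<q$. This closes the induction: the main step is the algebraic verification via the $s_t$ substitution, and monotonicity from part (a) keeps every $cr_t<q$.
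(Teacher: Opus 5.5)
Your proof is correct. This includes your remark that the strict inequality in (a) fails when $r_t=0$; in that case the sequence is zero from that index on and (b) is trivial.

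The paper gives no proof of its own; it imports the lemma from \cite[Lemma 3.2]{luo2022convergence}. The standard argument for (b) works with reciprocals. With $y=q-cr_t\in(0,1)$,
\[
\frac{1}{r_{t+1}}\geqslant\frac{1}{r_t(1-y)}\geqslant\frac{1+y}{r_t}=\frac{1+q}{r_t}-c,
\qquad\text{so}\qquad
\frac{1}{r_{t+1}}-\frac{c}{q}\geqslant(1+q)\left(\frac{1}{r_t}-\frac{c}{q}\right),
\]
and this iterates directly to the claim. Your substitution is the same quantity in disguise, since $q\,s_t=\left(\frac{1}{r_t}-\frac{c}{q}\right)^{-1}$. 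So $s_{t+1}\leqslant s_t/(1+q)$ is exactly this one-step inequality. You verify it through monotonicity of $u\mapsto u/(q-cu)$ and a cross-multiplication whose slack is $q(q-x)^2$. The reciprocal form gets the same inequality in one line from $\frac{1}{1-y}\geqslant 1+y$. So your route is essentially the standard one.

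One correction concerns the detour you discarded. The inequality $q\geqslant(1+q)(q-x)(1-q+x)$ does not fail for any $q\in(0,1)$, $x\in[0,q]$. The vertex $x=q-\tfrac12$ lies in $[0,q]$ only when $q\geqslant\tfrac12$, and then $(1+q)/4\leqslant q$. For $q<\tfrac12$ the maximum on $[0,q]$ is attained at $x=0$ and equals $q(1-q^2)\leqslant q$. The real defect of that affine bound is that it is too weak. Subtracting $c(1+q)/q$ instead of $c$ puts the fixed point of the reciprocal recursion at $c(1+q)/q^2>c/q$. Under the hypothesis $r_0<q/c$ it therefore does not yield the constant $\frac{qr_0}{q-cr_0}$. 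Your final argument does not use that step, so correctness is unaffected, but the reason you give for abandoning it should be fixed.
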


Based on \Cref{thm:one_step} and \Cref{lemma:recursion}, we subsequently present the main theorem of convergence rate.
\begin{theorem}[linear convergence rate]
\label{thm:convergence}
Under Assumption \ref{ass:1}, assume that the index of $k$-HiSD is chosen as $s\leqslant k\leqslant s+m$ with 
$v_p^{(t)}=u_p^{(t)}$ for $1\leqslant p\leqslant s$ 
and $v_p^{(t)} \in U^{(t)}$ for $s+1\leqslant p\leqslant k$.
If a fixed step size is chosen as $\beta_t \equiv \beta \leqslant {2}/(L+\mu)$ for any $t\geqslant 0$ and
the initial point $\theta^{(0)} \in \mathcal{U}(\mathcal{M}, \delta)$ satisfies
\begin{equation}
r_0=\left\|\theta^{(0)}-\hat{\theta}^{(0)}\right\|_2<
\hat{r}=\frac{2\mu^2 \beta}{(5\mu\beta+4)M},
\end{equation}
then $r_t=\|\theta^{(t)}-\hat{\theta}^{(t)}\|_2$ monotonically decreases and converges to $0$ as $t\rightarrow \infty$ with at least a linear convergence rate: 
\begin{equation}
r_t\leqslant \left(1+\mu\beta\right)^{-t} \frac{\hat{r}r_0}{\hat{r}-r_0}.
\end{equation}
\end{theorem}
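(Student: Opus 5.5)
The plan is to cast the one-step estimate of \Cref{thm:one_step} into the abstract recursion of \Cref{lemma:recursion}, and to use an induction so that the hypotheses of \Cref{thm:one_step} stay valid at every step. With the fixed step size $\beta_t\equiv\beta$, \eqref{equ:one_step_estimate} reads $r_{t+1}\leqslant(1-q)r_t+cr_t^2$ with
\begin{equation*}
q=\mu\beta,\qquad c=C+\tfrac{5M\beta}{2}=\tfrac{2M}{\mu}+\tfrac{5M\beta}{2}=\tfrac{M(4+5\mu\beta)}{2\mu}.
\end{equation*}
A direct computation gives $q/c=\dfrac{2\mu^2\beta}{(5\mu\beta+4)M}=\hat r$, so the threshold in the statement is exactly the one in \Cref{lemma:recursion}. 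For $q\in(0,1)$ we use $\beta\leqslant 2/(L+\mu)$, which gives $\mu\beta\leqslant 2\mu/(L+\mu)\leqslant 1$, with strict inequality whenever $\mu<L$. The degenerate case $\mu=L$, $\beta=1/\mu$ gives $q=1$; there the recursion is even stronger, $r_{t+1}\leqslant cr_t^2$, and can be treated separately or excluded by noting that $\mu<L$ generically.

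The key step, and really the only obstacle, is that \Cref{thm:one_step} requires $\theta^{(t)}\in\mathcal{U}(\mathcal{M},\delta)$ at every $t$, whereas \Cref{lemma:recursion} is purely scalar. I will argue by induction on $t$ that $\theta^{(t)}\in\mathcal{U}(\mathcal{M},\delta)$ and $r_t<\hat r$, with $r_t\leqslant r_0$. The base case is the hypothesis on $\theta^{(0)}$. For the inductive step, \Cref{thm:one_step} applies at step $t$, since the eigenvector conditions on $v_p^{(t)}$ are assumed for all $t$, and gives the recursion inequality. Then \Cref{lemma:recursion}(a) yields $r_{t+1}<r_t<\hat r$. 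Since $r_{t+1}<r_t\leqslant r_0<\delta$, and $r_{t+1}=\|\theta^{(t+1)}-\hat\theta^{(t+1)}\|_2$ is precisely the defining quantity of the tube, we get $\theta^{(t+1)}\in\mathcal{U}(\mathcal{M},\delta)$. Here $\hat\theta^{(t+1)}$ is well defined: $\operatorname{dist}(\theta^{(t+1)},\mathcal{M})\leqslant\|\theta^{(t+1)}-\hat\theta^{(t)}\|_2<\delta$, and the nearest-point projection is well defined on the tube by Assumption~\ref{ass:1}(i). This ordering matters, so I would check that the bound on $\|\theta^{(t+1)}-\hat\theta^{(t)}\|_2$ is available before $\hat\theta^{(t+1)}$ is used. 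This closes the induction and gives the strict monotone decrease of $r_t$.

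With the recursion now valid for all $t\geqslant0$ and $r_0<q/c$, \Cref{lemma:recursion}(b) gives
\begin{equation*}
r_t\leqslant(1+q)^{-t}\frac{qr_0}{q-cr_0}=(1+\mu\beta)^{-t}\frac{\hat r r_0}{\hat r-r_0}
\end{equation*}
for $t\geqslant1$, after dividing numerator and denominator by $c$. For $t=0$ the bound is trivial, since $\hat r/(\hat r-r_0)\geqslant1$. Hence $r_t\to0$ at least linearly, which completes the plan.
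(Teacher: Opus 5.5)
Your proposal is correct and follows the paper's proof: you set $q=\mu\beta$ and $c=C+\tfrac{5M\beta}{2}$ so that $q/c=\hat r$, apply \Cref{lemma:recursion}(a) inductively to get $r_{t+1}<r_t<\hat r$, which also keeps $\theta^{(t)}$ in $\mathcal{U}(\mathcal{M},\delta)$ so that \Cref{thm:one_step} applies at every step, and finish with \Cref{lemma:recursion}(b). You also spell out two points the paper leaves implicit: the tube membership in the induction, and the boundary case $\mu\beta=1$ (possible only when $\mu=L$), which is harmless because the estimate $1/r_{t+1}-c/q\geqslant(1+q)(1/r_t-c/q)$ that yields \Cref{lemma:recursion}(b) remains valid for $q=1$.
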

\begin{proof}
From \Cref{thm:one_step} with $t=0$,
because of 
$r_0<\hat{r}$, we have 
$r_{1}<r_0<\hat{r}$ by applying 
\Cref{lemma:recursion} (a) with $q=\mu\beta\in(0,1)$ and $c=q/\hat{r}=C+\frac{5M\beta}{2}>0$.
Inductively, we have $r_{t+1} < r_{t} <\hat{r}$ for any $t \geqslant 0$.
Then we combine \Cref{thm:one_step} for all $t\geqslant 0$ and \Cref{lemma:recursion} (b) to complete the proof.
\end{proof}
\begin{remark}
This result can be generalized to cases with inexact eigenvectors $\{v_p^{(t)}\}_{p=1}^k$, that  satisfy specific accuracy bounds, by applying techniques from \Cref{thm:inaccurate} and \cite{luo2022convergence}. 
\end{remark}

\subsection{Gradient alignment of HiSD}
Similar to gradient descent \cite{forsythe1968asymptotic}, the HiSD algorithm exhibits a tendency  for the gradient $\nabla E(\theta)$ to align with a specific eigenvector of the Hessian $\nabla^2E(\theta)$.
This phenomenon, referred to as ``gradient alignment'', clarifies the asymptotic direction of the gradient and enables a more precise characterization of the algorithm’s convergence rate. 
Naturally, this property also holds for nondegenerate saddle points.
For saddle manifolds, however, the presence of zero eigenvalues introduces significant challenges in proving gradient alignment rigorously. 
To address these challenges, we now introduce additional assumptions and notations that facilitate a precise analysis of gradient alignment in degenerate cases.

\begin{assumption}\label{ass:align}
There exist positive constants $\mu_{1}, \mu_{2}>0$ such that, for all sufficiently large $t$,
\begin{enumerate}
\item [(i)] 
$\mu
<\min\left(|\lambda_s^{(t)}|, |\lambda_{s+m+1}^{(t)}|\right)
<\mu_{1}<\mu_{2}
<\max\left(|\lambda_s^{(t)}|, |\lambda_{s+m+1}^{(t)}|\right)<L$. 
\item [(ii)] 
$\mu
<\min\left(|\hat{\lambda}_s^{(t)}|, |\hat{\lambda}_{s+m+1}^{(t)}|\right)
<\mu_{1}<\mu_{2}<\max\left(|\hat{\lambda}_s^{(t)}|, |\hat{\lambda}_{s+m+1}^{(t)}|\right)<L$.
\end{enumerate}
\end{assumption}
Let $\lambda_{\min}^{(t)}$ denote the eigenvalue with smaller absolute value between 
$\lambda_s^{(t)}$ and $\lambda_{s+m+1}^{(t)}$,
and let $u^{(t)}$ denote the corresponding unit eigenvector of $\nabla^2 E(\theta^{(t)})$.  
Indeed, $u^{(t)}$ is precisely the eigenvector toward which the gradient direction aligns during iterations.
Similarly, for the Hessian $\nabla^2E(\hat{\theta}^{(t)})$, we define $\hat{\lambda}_{\min}^{(t)}$ and ${\hat u}^{(t)}$.
For convenience, we denote by $\mathcal{P}_{u}$ and $\mathcal{P}_{u^\perp}$ the projection operators onto the subspace spanned by the vector $u$ and onto its orthogonal complement, respectively.

To characterize gradient alignment, we define 
$y_{t} =\left\|\mathcal{P}_{u^{(t)}}\nabla E(\theta^{(t)})\right\|_{2}$ as the magnitude of the component of $\nabla E(\theta^{(t)})$ in the direction of the chosen eigenvector,
and $z_{t} =\left\|\mathcal{P}_{u^{(t)\perp}}\nabla E(\theta^{(t)})\right\|_{2}=\sqrt{\left\|\nabla E(\theta^{(t)}\right)\|_2^2-y_t^2}$.
To avoiding excessive technicalities in boundary cases, we impose the following assumption.
\begin{assumption}\label{ass:power}
There exists a constant $\mu_0>0$ such that, for all sufficiently large $t$, $y_{t}\geqslant \mu_{0} r_{t}$, here $r_t = \|\theta^{(t)} - \hat{\theta}^{(t)}\|_2$.
\end{assumption}
The mathematical description of gradient alignment is 
$y_t/\|\nabla E(\theta^{(t)})\|_2 \to 1$, or equivalently, $z_t/y_t \to 0$, and we present the theorem as below to show that $\nabla E(\theta^{(t)})$ becomes asymptotically aligned with the direction $u^{(t)}$:
\begin{theorem}[gradient alignment]
\label{thm:gradient_alignment}
Under Assumptions \ref{ass:1}, \ref{ass:align} and \ref{ass:power}, assume that the index of $k$-HiSD is chosen as $s\leqslant k\leqslant s+m$ with 
$v_p^{(t)}=u_p^{(t)}$ for $1\leqslant p\leqslant s$ 
and $v_p^{(t)} \in U^{(t)}$ for $s+1\leqslant p\leqslant k$.
If a fixed step size is chosen as $\beta_t \equiv \beta \leqslant 1/L $ for any $t\geqslant 0$ and the initial point $\theta^{(0)}\in \mathcal{U}(\mathcal{M},\delta)$ satisfies
\begin{equation}
r_0<\hat{r}=\frac{2\mu^2 \beta}{(5\mu\beta+4)M},
\end{equation}
then 
$z_t/y_t \to 0$ as $t\to\infty$.
\end{theorem}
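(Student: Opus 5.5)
Since $\beta\leqslant 1/L\leqslant 2/(L+\mu)$ and $r_0<\hat r$, \Cref{thm:convergence} applies: $r_t$ decreases monotonically and $r_t\leqslant c_0(1+\mu\beta)^{-t}$. The plan is to follow the gradient itself, $g_t=\nabla E(\theta^{(t)})$, through one step of the iteration, much like the classical power-iteration argument for gradient descent. The components of $g_t$ orthogonal to $u^{(t)}$ should be contracted by a factor strictly smaller than the factor that multiplies the $u^{(t)}$ component. All remaining discrepancies should be $O(r_t)$ relative to $\|g_t\|_2$, and these vanish geometrically.

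\textbf{Step 1 (one-step gradient recursion).} Write $\theta^{(t+1)}=\theta^{(t)}-\beta A^{(t)}g_t$ with $A^{(t)}=I-2\sum_p v_p^{(t)}v_p^{(t)\top}$. By \Cref{lemma:expansion},
\begin{equation*}
g_{t+1}=\bigl(I-\beta H_tA^{(t)}\bigr)g_t-\beta J_tA^{(t)}g_t ,
\end{equation*}
where $H_t=\nabla^2E(\theta^{(t)})$ and $\|J_t\|_2\leqslant \tfrac{M}{2}\beta\|g_t\|_2\leqslant \tfrac{ML\beta}{2}r_t$. I will decompose $g_t$ in the eigenbasis of $H_t$. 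On $\operatorname{span}\{u_p^{(t)}\}_{p\leqslant s}$ the matrix $I-\beta H_tA^{(t)}$ acts as $1+\beta\lambda_p=1-\beta|\lambda_p|$. On $\operatorname{span}\{u_p^{(t)}\}_{p>s+m}$ it acts as $1-\beta\lambda_p$. Because $\beta\leqslant 1/L$, both multipliers lie in $[0,1-\beta|\lambda_p|]$. On $U^{(t)}$ the map has norm at most $1+\tfrac{3}{4}\beta\mu\cdot$(small), and by \Cref{lemma:decom} this component is at most $Mr_t^2=O(r_t)\|g_t\|_2$. Under Assumption~\ref{ass:align}(i), the direction $u^{(t)}$ is multiplied by $\rho_t:=1-\beta|\lambda^{(t)}_{\min}|\geqslant 1-\beta\mu_1$. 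Every other direction in $U^{(t)\perp}$ is multiplied by at most $1-\beta\mu_2$. I read Assumption~\ref{ass:align}(i) as saying that all nondegenerate eigenvalues other than $\lambda_{\min}^{(t)}$ have modulus above $\mu_2$. Since the assumption is stated only for $\lambda_s$ and $\lambda_{s+m+1}$, this is the interpretation I will use.

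\textbf{Step 2 (changing the reference eigenvector).} The quantities $y_{t+1}$ and $z_{t+1}$ are measured against $u^{(t+1)}$, not $u^{(t)}$. Assumption~\ref{ass:align}(i) gives a spectral gap $\mu_2-\mu_1$ around $\lambda_{\min}$. Davis--Kahan, combined with Lipschitz continuity of the Hessian, then gives $\|\mathcal P_{u^{(t+1)}}-\mathcal P_{u^{(t)}}\|_2\leqslant \tfrac{2M}{\mu_2-\mu_1}\|\theta^{(t+1)}-\theta^{(t)}\|_2\leqslant c\,r_t$. Collecting the terms from Step 1, I expect
\begin{equation*}
z_{t+1}\leqslant (1-\beta\mu_2)z_t+c_1r_t\|g_t\|_2,\qquad y_{t+1}\geqslant (1-\beta\mu_1)y_t-c_1r_t\|g_t\|_2 .
\end{equation*}
Assumption~\ref{ass:align}(ii) is used to see that the gap does not degenerate, whichever of $\lambda_s$ and $\lambda_{s+m+1}$ attains the minimum.

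\textbf{Step 3 (ratio recursion).} By Assumption~\ref{ass:power}, $y_t\geqslant\mu_0 r_t$, and $\|g_t\|_2\leqslant Lr_t$ by \Cref{lemma:decom}. Hence $r_t\|g_t\|_2\leqslant (L/\mu_0)\,r_t\,y_t$. Dividing the two inequalities of Step 2 gives, for $w_t=z_t/y_t$,
\begin{equation*}
w_{t+1}\leqslant \frac{(1-\beta\mu_2)w_t+c_2r_t}{1-\beta\mu_1-c_2r_t}.
\end{equation*}
Since $r_t\to0$ geometrically, the ratio $\kappa=\frac{1-\beta\mu_2}{1-\beta\mu_1}<1$ eventually governs the recursion. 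Unrolling it gives $w_t\leqslant \kappa'^{\,t}w_{T}+\sum_j\kappa'^{\,t-j}c_3r_j\to0$, which proves the claim.

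The main obstacle is Step 2. The bookkeeping must be carried out carefully: the component in $U^{(t)}$ and the rotation of $u^{(t)}$ must both be charged as $O(r_t)y_t$, and Assumption~\ref{ass:power} is exactly what permits this. The case where $\lambda_{\min}$ switches between the negative and the positive block also has to be handled. My first attempt would be to use the gap hypothesis to show that this switch cannot happen for large $t$, by continuity, because $\theta^{(t)}$ converges, as \Cref{thm:convergence} and summability of step lengths show.
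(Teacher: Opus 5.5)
Your proposal is correct and follows essentially the same route as the paper. Both arguments rest on the one-step recursion $\nabla E(\theta^{(t+1)})=F^{(t)}\nabla E(\theta^{(t)})+O(r_t^2)$ with $F^{(t)}=I-\beta\nabla^2E(\theta^{(t)})R^{(t)}$, which multiplies by at most $1-\beta\mu_2$ on $U^{(t)\perp}\cap u^{(t)\perp}$ and by at least $1-\beta\mu_1$ along $u^{(t)}$; they then use \Cref{lemma:decom} to make the $U^{(t)}$-component $O(r_t^2)$, Assumption~\ref{ass:power} to convert $r_t^2$ into $r_t y_t$, and an unrolled ratio recursion driven by the geometric decay of $r_t$.

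The differences are cosmetic. You bound $\|\mathcal{P}_{u^{(t+1)}}-\mathcal{P}_{u^{(t)}}\|_2$ by comparing consecutive Hessians directly, whereas the paper passes through $\hat u^{(t)}$. Your route therefore does not actually need Assumption~\ref{ass:align}(ii), although your gap should be $\min(\mu_2-\mu_1,3\mu/4)$ to account for the near-zero eigenvalue cluster. Your strengthened reading of Assumption~\ref{ass:align}(i) is precisely what the paper's bound $\|F^{(t)}\mathcal{P}_{U^{(t)\perp}}\mathcal{P}_{u^{(t)\perp}}\|_2\leqslant 1-\beta\mu_2$ implicitly uses.
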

\begin{proof}
From Assumption \ref{ass:power}, for all sufficiently large $t$, both $u^{(t)}$ and $\hat{u}^{(t)}$ are eigenvectors corresponding to the $p$-th smallest eigenvalue of the Hessian $\nabla^2 E(\theta^{(t)})$ and $\nabla^2 E(\hat{\theta}^{(t)})$, respectively, with the same $p=s$ or $s+m+1$. 
By an argument similar to the proof of \Cref{lemma:dist}, we have
\begin{equation*}
\left\|\mathcal{P}_{u^{(t)}}-\mathcal{P}_{\hat{u}^{(t)}}\right\|_{2}
\leqslant 
C\left\|\theta^{(t)}-\hat{\theta}^{(t)}\right\|_{2},
\quad
\left\|\mathcal{P}_{u^{(t+1)}}-\mathcal{P}_{\hat{u}^{(t)}}\right\|_{2}
\leqslant 
C\left\|\theta^{(t+1)}-\hat{\theta}^{(t)}\right\|_{2}.
\end{equation*}
Since the assumptions of \Cref{thm:convergence} hold, setting $r_t = \|\theta^{(t)} - \hat{\theta}^{(t)}\|_2$ and applying \eqref{eqn:rt+1} yield $\|\theta^{(t+1)} - \hat{\theta}^{(t)}\|_2 < r_t$. Thus,
\begin{equation}\label{eqn:pupu}
\|\mathcal{P}_{u^{(t+1)}}-\mathcal{P}_{u^{(t)}}\|_{2}
\leqslant
\|\mathcal{P}_{u^{(t+1)}}-\mathcal{P}_{\hat{u}^{(t)}}\|_{2}+
\|\mathcal{P}_{u^{(t)}}-\mathcal{P}_{\hat{u}^{(t)}}\|_{2}
\leqslant 2C r_t.
\end{equation}

Applying \Cref{lemma:expansion} at $\theta^{(t)}$ and $\theta^{(t+1)}$ yields
\begin{equation}\label{eqn:t+1} 
\begin{aligned}
&\nabla E(\theta^{(t+1)})
= \nabla E(\theta^{(t)})  + 
\left(\nabla^2E(\theta^{(t)})+ D^{(t)}\right)
\left(\theta^{(t+1)}-\theta^{(t)}\right),
\\
& \left\|D^{(t)}\right\|_2\leqslant 
\frac M2 \left\|\theta^{(t+1)}-\theta^{(t)}\right\|_2
\leqslant M r_t,
\quad 
\left\|\theta^{(t+1)}-\theta^{(t)}\right\|_2
\leqslant 2 r_t.
\end{aligned}
\end{equation}
Using the explicit Euler scheme of $k$-HiSD:
\begin{equation}
\theta^{(t+1)}=
\theta^{(t)}
-\beta R^{(t)}\nabla E(\theta^{(t)}), \quad
\text{where } 
R^{(t)}=I-2\sum_{p=1}^k v_p^{(t)}v_p^{(t)\top},
\end{equation}
we have 
\begin{equation} \label{eqn:power}
\nabla E(\theta^{(t+1)})
= \left(I -
\beta \nabla^2E(\theta^{(t)}) R^{(t)} \right)
\nabla E(\theta^{(t)}) + 
D^{(t)}
\left(\theta^{(t+1)}-\theta^{(t)}\right).
\end{equation}

By direct calculation using the eigen decomposition of $\nabla^2E(\theta^{(t)})$, we obtain
\begin{equation}\label{eqn:expan}
\begin{aligned}
F^{(t)} :=&\; I-\beta\nabla ^2E(\theta^{(t)})R^{(t)}  \\
=&\sum_{p=1}^s   
\left(1+\beta\lambda_p^{(t)}\right)
u_p^{(t)}{u_p^{(t)\top}}
+ \sum_{p=s+1}^{d}
\left(1-\beta\lambda_p^{(t)}\right)
u_p^{(t)}{u_p^{(t)\top}}
\\
&
+2\beta
\left(\sum_{p=s+1}^{s+m} \lambda_pu_p^{(t)}u_p^{(t)\top}\right)
\left(\sum_{p=s+1}^k v_p^{(t)}{v_p^{(t)\top}}\right).
\end{aligned}
\end{equation}
and $\|F^{(t)}\|_2\leqslant 1+\beta \max_{p}\{|\lambda_p^{(t)}|\}\leqslant 2$.

Substituting \eqref{eqn:power} into the definition of \(z_{t+1}\), we obtain
\begin{equation}\label{eqn:332}
\begin{aligned}
z_{t+1}&=\|\mathcal{P}_{{u^{(t+1)}}^\perp}\nabla E(\theta^{(t+1)})\|_{2}\\
&\leqslant
\|\mathcal{P}_{{u^{(t)}}^\perp}\nabla E(\theta^{(t+1)})\|_{2}+
 \|\mathcal{P}_{{u^{(t+1)}}^\perp}-\mathcal{P}_{{u^{(t)}}^\perp}\|_{2} \|\nabla E(\theta^{(t+1)})\|_{2}\\
&\leqslant
\|\mathcal{P}_{{u^{(t)}}^\perp}
F^{(t)}\nabla E(\theta^{(t)})\|_2
+\|D^{(t)}
(\theta^{(t+1)}-\theta^{(t)})\|_{2}+2CLr_{t}^{2}\\
&\leqslant\|\mathcal{P}_{{u^{(t)}}^\perp}
F^{(t)}\nabla E(\theta^{(t)})\|_2+(2M+2CL)r_{t}^{2}.
\end{aligned}
\end{equation}
Here we use \eqref{eqn:pupu}, \eqref{eqn:t+1}, 
and $\|\nabla E(\theta^{(t+1)})\|_2 
\leqslant L r_{t+1} \leqslant L r_{t}$ 
from \Cref{lemma:decom}.
Following \Cref{lemma:decom}, we write 
$\nabla E(\theta^{(t)}) = x^{(t)} + w^{(t)}$ with  $x^{(t)}=\mathcal{P}_{{U^{(t)}}^\perp}\nabla E(\theta^{(t)})$ 
and $w^{(t)}=\mathcal{P}_{U^{(t)}}\nabla E(\theta^{(t)})$ 
satisfying $\|w^{(t)}\|_2\leqslant M r_{t}^2$. 
Moreover, we observe that  
$\mathcal{P}_{{u^{(t)}}^\perp}$ commutes with $F^{(t)}$ and $\mathcal{P}_{{U^{(t)}}^\perp}$. 
Consequently, we derive the following estimates:
\begin{equation}\label{eqn:334}
\begin{aligned}
&\quad \|\mathcal{P}_{{u^{(t)}}^\perp}F^{(t)} 
\nabla E(\theta^{(t)})\|_2
\leqslant 
\|F^{(t)}{\mathcal{P}_{{u^{(t)}}^\perp} x^{(t)}}\|_2
+\|F^{(t)}\|_2
\|w^{(t)}\|_2\\
&\leqslant \|F^{(t)} \mathcal{P}_{{U^{(t)}}^\perp}{\mathcal{P}_{{u^{(t)}}^\perp}}\|_2\cdot
z_t +2\|w^{(t)}\|_2
\leqslant (1-\beta\mu_{2})z_{t}+2Mr_{t}^2.
\end{aligned}
\end{equation}
The last inequality follows from \eqref{eqn:expan} and $\max\left(|\lambda_s^{(t)}|, |\lambda_{s+m}^{(t)}|\right) > \mu_{2}$, which imply
\begin{equation}
\|F^{(t)}\mathcal{P}_{{U^{(t)}}^\perp}{\mathcal{P}_{{u^{(t)}}^\perp}}\|_2
=\|F^{(t)} (I- \mathcal{P}_{{U^{(t)}}}- \mathcal{P}_{{u^{(t)}}})\|_2
\leqslant1-\beta\mu_{2}.
\end{equation}
Combining \eqref{eqn:332} and \eqref{eqn:334}, we obtain 
\begin{equation}\label{eqn:zzz}
z_{t+1}\leqslant (1-\beta\mu_{2})z_{t}+\overline{C}r_{t}^2,
\quad \text{where } \overline{C}=4M+2CL.
\end{equation}

Similarly, for $y_{t+1}$, we obtain 
\begin{equation}\label{eqn:yyy}
\begin{aligned}
y_{t+1}&=
\|\mathcal{P}_{u^{(t+1)}}\nabla E(\theta^{(t+1)})\|_{2}
\\
&\geqslant
\|\mathcal{P}_{u^{(t)}}\nabla E(\theta^{(t+1)})\|_{2}-
\|\mathcal{P}_{u^{(t+1)}}-\mathcal{P}_{u^{(t)}}\|_{2}
\|\nabla E(\theta^{(t+1)})\|_{2}
\\
&\geqslant
\|\mathcal{P}_{u^{(t)}}F^{(t)}
\nabla E(\theta^{(t)})\|_{2}
-\|D^{(t)}(\theta^{(t+1)}-\theta^{(t)})\|_{2}
- 2CLr_{t}^{2}
\\
&\geqslant
\|F^{(t)}\mathcal{P}_{u^{(t)}} \mathcal{P}_{u^{(t)}}\nabla E(\theta^{(t)})\|_{2}
- (2M+2CL)r_{t}^{2}
\\
&\geqslant
\|(1-\beta|\lambda_{\min}^{(t)}|)\mathcal{P}_{u^{(t)}}\nabla E(\theta^{(t)})\|_{2}- \overline{C}r_{t}^{2}
\geqslant
(1-\beta\mu_1)y_{t}-\overline{C}r_{t}^{2}.
\end{aligned}
\end{equation}

From $r_t\leqslant y_t/\mu_0$ in Assumption \ref{ass:power}, we have 
$y_{t+1} \geqslant 
(1-\beta\mu_1)y_t  -\overline{C}r_{t}y_t/\mu_0$.
\Cref{thm:convergence} implies that  $r_t \leqslant C_1 \xi^t$ for some constant $C_1>0$, where $\xi=(1+\mu\beta)^{-1}$. 
Consequently, there exists an integer $T>0$ such that for all $t\geqslant T$, Assumptions \ref{ass:align} and \ref{ass:power} are satisfied, and the condition $\overline{C}r_{t}/\mu_0 < (1-\beta\mu_1)/2$ holds. Therefore, 
\begin{equation}\label{eqn:slt}
y_{t+1} \geqslant
(1-\beta\mu_1)y_t  -\overline{C}r_{t}y_t/\mu_0
> (1-\beta\mu_1) y_t/2 > 0.
\end{equation}
Combining \eqref{eqn:zzz} and \eqref{eqn:yyy}, we obtain
\begin{equation}
\frac{z_{t+1}}{y_{t+1}} \leqslant \frac{(1-\beta\mu_2)z_t + \overline{C}r_t^2}{(1-\beta\mu_1)y_t - \overline{C}r_t^2}
=\rho\frac{z_t}{y_t}
+ 
\frac{(1-\beta\mu_2)z_t + (1-\beta\mu_1)y_t}
{(1-\beta\mu_1)y_t((1-\beta\mu_1)y_t - \overline{C}r_t^2)}\overline{C}r_t^2
,
\end{equation}
where $0 < \beta\mu_1<\beta\mu_2 = {\mu_2}/{L} < 1$, and 
$\rho=(1-\beta\mu_2)/(1-\beta\mu_1)\in(0,1)$. \\
Using $r_t\leqslant y_t/\mu_0$, $z_t \leqslant L r_t \leqslant L y_t/\mu_0$ from \Cref{lemma:decom}, and \eqref{eqn:slt}, we have
\begin{equation}
\frac{z_{t+1}}{y_{t+1}} - \rho\frac{z_t}{y_t} \leqslant
\frac{(1-\beta\mu_2)Ly_t/\mu_0 + (1-\beta\mu_1)y_t}
{(1-\beta\mu_1)^2y_t^2/2}\overline{C}r_t y_t/\mu_0
:=\tilde{C} r_t,
\end{equation}
here $\tilde{C}=2\overline{C}\mu_0^{-2}(1-\beta\mu_1)^{-2}[(1-\beta\mu_2)L+(1-\beta\mu_1)\mu_0]  $.

Finally, for $t\geqslant T$, using $r_t \leqslant C_1\xi^t$, we have 
\begin{equation}\label{eqn:ztyt}
\frac{z_t}{y_t} \leqslant 
\rho^{t-T} \frac{z_T}{y_T} + 
\tilde{C} C_1
\sum_{\tau=T}^{t-1}\rho^{t-1-\tau} \xi^\tau
\leqslant \rho^{t-T} \frac{z_T}{y_T} + \tilde C C_1 t
\left(\max\{\rho,\xi\}\right)^{t-1}\to 0,
\end{equation}
as $t\to\infty$, which completes the proof.
\end{proof}

\begin{remark}
Under the gradient alignment condition $\mathcal{P}_{u^\perp}\nabla E(\theta) \approx 0$, the late-stage decay rate of $\|\nabla E(\theta)\|_2$ can be estimated asymptotically as follows:
\begin{equation*}
\frac{\mathrm{d}}{\mathrm{d}t}G(\theta)
=-(\nabla E(\theta))^{\top}\nabla^{2}E(\theta)\frac{\mathrm{d}\theta}{\mathrm{d}t}
\approx
-|\lambda_{\min}|\cdot \|\nabla E(\theta)\|_2^2,
\end{equation*}
or equivalently,
$\frac{d}{\mathrm{d}t}
\|\nabla E(\theta)\|_2 \approx 
-|\lambda_{\min}| \cdot \|\nabla E(\theta)\|_2$.
Hence, $\|\nabla E(\theta)\|_2$ decays approximately exponentially with rate $-|\lambda_{\min}|$, where $\lambda_{\min}$ is the smallest nonzero eigenvalue of $\nabla^2 E(\theta)$ excluding the $(s+1)$-th to $(s+m)$-th smallest eigenvalues.

In discrete iterations, by applying \Cref{lemma:expansion} to $\hat{\theta}^{(t)}$ and $\theta^{(t)}$, we obtain
\begin{equation}
\nabla E(\theta^{(t)})=
\nabla^{2}E(\theta^{(t)})
(\theta^{(t)}-\hat{\theta}^{(t)})+O(r_t^2).
\end{equation}
Since $\mathcal{P}_{U^{(t)}}(\theta^{(t)}-\hat{\theta}^{(t)})=O(r_t^2)$, we have
\begin{equation}\label{eqn:grad2err}
\theta^{(t)}-\hat{\theta}^{(t)}
= \left(\lambda_{\min}^{(t)}\right)^{-1} \nabla E(\theta^{(t)})+o(r_t), \quad
\text{as }t\to \infty,
\end{equation}
where we used the gradient alignment result \eqref{eqn:ztyt}.
In practice, one can apply $\|\nabla E(\theta)\|_2$ as a terminal condition to monitor the convergence of the numerical algorithm.
The error $\|\theta^{(t)}-\hat{\theta}^{(t)}\|$ can be estimated using \eqref{eqn:grad2err} at the end of iterations.

\end{remark}
\section{Numerical experiments}\label{sec:4}
In the context of energy landscapes with degenerate saddle points, the loss landscape of over-parameterized neural networks exhibits a distinctive level of extreme degeneracy and non-convexity, distinguishing it from simpler models with well-defined symmetries. 
Most critical points of the loss function are degenerate, and the landscape contains multiple local minima alongside diverse saddle points. 
This complexity poses significant challenges for neural-network optimization \cite{NIPS2014}.
Moreover, recent research indicates that gradient-based optimization methods with small initializations can induce a ``saddle-to-saddle'' training regime \cite{Pesme2023Saddle}. 
Consequently, it is essential to understand the impact of saddle points in deep learning. 
Computational analysis of saddle points can offer valuable insights into the optimization dynamics and training behavior of neural networks.

In this section, we present some numerical results for neural networks to demonstrate the performance of HiSD in computing degenerate saddle points.
We adopt an experimental setup of a two-layer neural network with six neurons:
\begin{equation}
f_{\theta}: \mathbb{R} \to \mathbb{R}, \quad
f_{\theta}(x) = \sum_{i=1}^{6} a_i \tanh(w_i x + b_i),
\end{equation}
as the hypothesis space. 
The ground truth $f^*=\sum_{i=1,2} a_i^* \tanh(w_i^* x + b_i^*)$ has two neurons.
In our experiments, the training dataset is $S=\{(x_i,y_i)\}_{i=1}^n$, where $n=15$, with $x_i \sim  \mathcal{N}(0,1)$ i.i.d. and $y_i=f^*(x_i)$. 
The empirical risk, serving as the energy function, is given by
\begin{equation}
E({\theta})=\frac{1}{2n}\sum_{i=1}^{n}\left(f_{\theta}({x}_{i})-y_{i}\right)^{2}.
\end{equation}

It is shown in \cite{zhang2021embedding} that, during neural network training, the parameters may approach certain saddle manifolds, which play an important role in understanding both the training dynamics and the loss landscape. 
Such saddle manifolds can be constructed via neuron splitting. 
To ensure precision in our numerical experiments, we employ this method to construct the saddle manifold $\mathcal{M}$. 
In our numerical experiments, the saddle manifold $\mathcal{M}$ under consideration is a 5‑dimensional linear manifold, where the Hessian matrix has five negative eigenvalues, five zero eigenvalues, and eight positive eigenvalues,
\textit{i.e.}, index $s=5$ and nullity $m=5$ in the notation of this paper.
The initial point $\theta^{(0)}$ is chosen as a random perturbation of a fixed saddle point $\theta^* \in \mathcal{M}$, and this same initial point is used consistently across all experiments.

\subsection{HiSD with different indices}
Our theoretical results show that the $k$-HiSD method exhibits convergence and similar qualitative behaviors for $s\leqslant k \leqslant s+m$.
Therefore, we apply $k$-HiSD for $5 \leqslant k \leqslant 10$ with a fixed step size $\beta = 0.1$ and LOBPCG as the eigensolver.
\Cref{fig:index} shows the 2-norm of gradient, the distance to the saddle manifold, and the extent of gradient alignment during iterations for different $k$.
Notably, since the projection $\hat{\theta}^{(t)} = \mathcal{P}_{\mathcal{M}} \theta^{(t)}$ onto the linear manifold $\mathcal{M}$ can be computed explicitly, we can directly evaluate $\|\theta^{(t)} - \hat{\theta}^{(t)}\|_2$.

\begin{figure}[!htbp]
\centering
\includegraphics[width=1\linewidth]{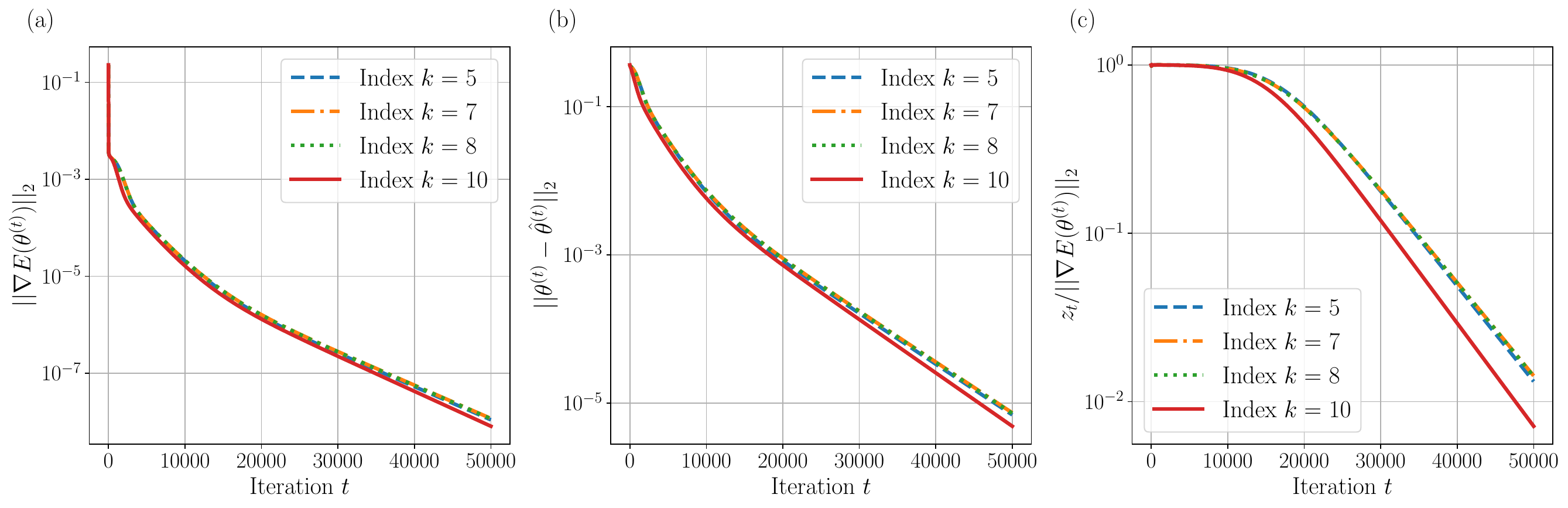}
\caption{Plots of 
(a) gradient norm $\|\nabla E(\theta^{(t)})\|_2$, 
(b) distance to the saddle manifold $\|\theta^{(t)}-\hat{\theta}^{(t)}\|_2$ and 
(c) extent of gradient alignment ${z_t}/{\|\nabla E(\theta^{(t)})\|_2}$ with respect to the iteration number $t$ for HiSD with different indices $k$.}
\label{fig:index}
\end{figure}
\Cref{fig:index} illustrates that both $\|\nabla E(\theta^{(t)})\|_2$ and $\|\theta^{(t)}-\hat{\theta}^{(t)}\|_2$ converge to zero at similar rates for different indices $k$. 
Furthermore, the ratio ${z_t}/{\|\nabla E(\theta^{(t)})\|_2}$, which represents the component of the gradient orthogonal to the eigenvector corresponding to $\lambda_{\min}$, also converges to zero in agreement with the gradient alignment tendency.

\subsection{Different acceleration methods for HiSD}
As demonstrated by the preceding numerical results, saddle point degeneracy poses a significant challenge to search algorithms, often resulting in slow convergence rates and requiring a large number of iterations. 
To address this, momentum‐accelerated variants of HiSD---for example, HiSD with Heavy‐Ball acceleration \cite{luo2025accelerated} and HiSD with Nesterov’s acceleration \cite{liu2024neural}---have been developed, effectively improving convergence speed.
In our numerical experiments, we compare the performance of the explicit Euler scheme, Heavy‐Ball acceleration with momentum parameters $\gamma=0.6$ and $0.9$, and Nesterov’s acceleration. 
All methods utilize a fixed step size of $\beta = 0.1$ and employ LOBPCG as the eigensolver.
For Nesterov’s acceleration, a restart strategy is implemented every 500 iterations. 

\begin{figure}[!htbp]
\centering
\includegraphics[width=1\linewidth]{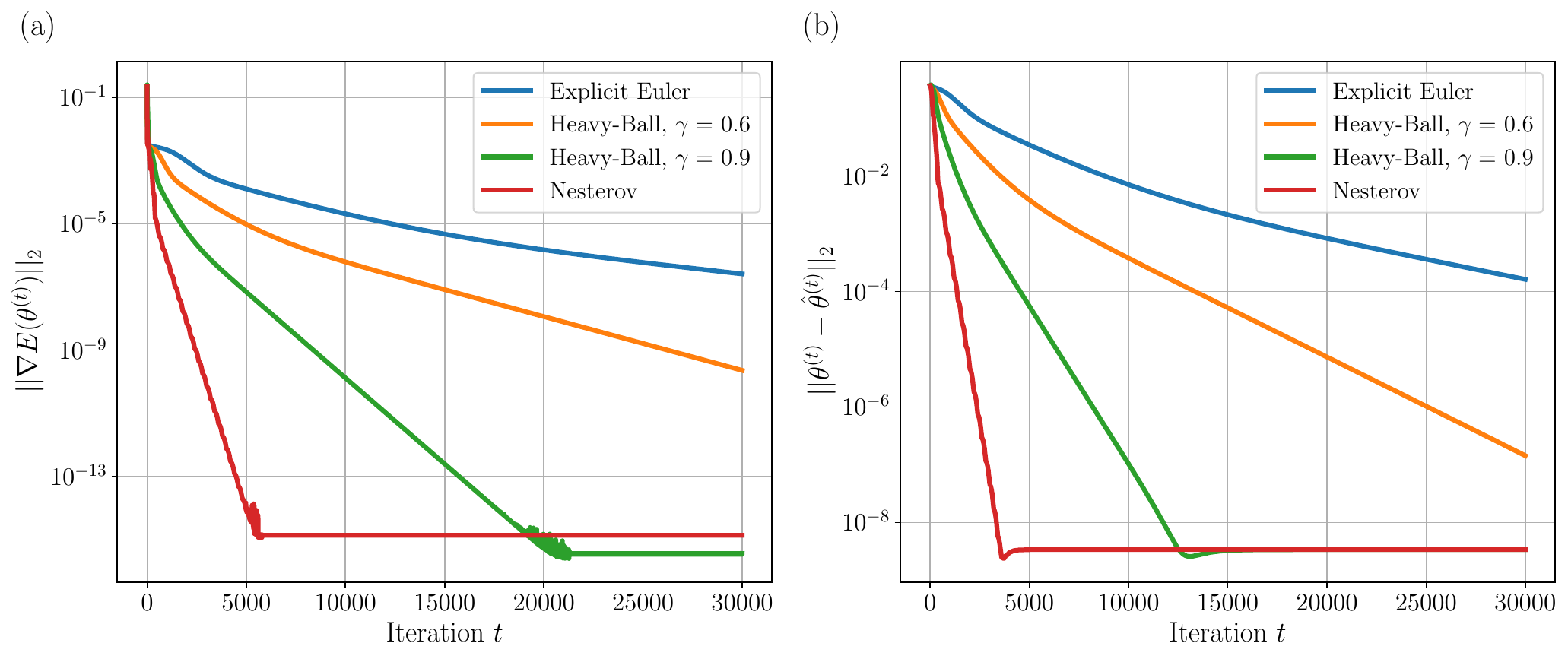}
\caption{Plots of 
(a) gradient norm $\|\nabla E(\theta^{(t)})\|_2$ and 
(b) distance to the saddle manifold $\|\theta^{(t)}-\hat{\theta}^{(t)}\|_2$ with respect to the iteration number $t$ for different acceleration methods.}
\label{fig:acceleration}
\end{figure}

As illustrated in \cref{fig:acceleration}, these accelerated methods significantly improve convergence speed compared to the explicit Euler scheme.
The Nesterov’s acceleration yields superior performance, with $\|\nabla E(\theta^{(t)})\|_2$ decreasing to $10^{-15}$ in approximately 5,000 steps. 
Similarly, the Heavy‐Ball method with $\gamma = 0.9$ reduces the gradient norm to $10^{-15}$ within roughly 20,000 steps, while the one with $\gamma=0.6$ converges at a slower speed. 
The distance to the saddle manifold, $\|\theta^{(t)}-\hat{\theta}^{(t)}\|_2$, exhibits a similar convergence behavior. 

\section{Concluding remarks} \label{sec:5}
In this paper, we have presented a rigorous analysis of HiSD for degenerate saddle points on critical manifolds by utilizing Morse--Bott functions. 
We addressed the challenge posed by zero eigenvalues in the Hessian by proving the local convergence of the continuous HiSD and, most notably, establishing the linear convergence rate of the discrete HiSD algorithm near critical manifolds. 
Additionally, we provided a theoretical justification for the gradient alignment tendency and clarified the flexibility of index selection in practical computations.
Numerical experiments on neural networks validated our analysis, demonstrating that HiSD maintains rapid convergence even in highly-degenerate landscapes. 
These results bridge the gap between the empirical success of HiSD and its mathematical understanding in degenerate cases.

Our findings confirm the feasibility of applying HiSD to search for degenerate saddle points, providing a solid theoretical foundation for future applications in degenerate problems, and further underscoring the broad applicability of HiSD as an efficient saddle-searching method. 
Given that energy functions in practical problems often exhibit degeneracy, the development of more efficient and specialized algorithms remains a priority. 
For instance, recent work has improved HiSD by filtering zero eigenvectors to accelerate convergence \cite{jiang2025nullspace}. 
Moving forward, further advancements and the design of robust algorithms capable of efficiently searching for degenerate saddle points in scientific applications warrant continued research.

\bibliographystyle{siamplain} 
\bibliography{refs}
\appendix

\section{Proofs of lemmas}
\label{app}
\begin{proof}[Proof of \Cref{lemma:dist}]
\cite[Theorem 8.1.5]{golub2013matrix} leads to
\begin{equation}\label{eqn:lemma:dist1}
|\lambda_p-\hat{\lambda}_p| \leqslant 
\|\nabla^2 E(\hat{\theta})-\nabla^2 E(\theta)\|_2 \leqslant
M\|\theta-\hat{\theta}\|_2 \leqslant
M\delta \leqslant \mu/2. 
\end{equation}
From \cite[Theorem 3.6]{stewart1990matrix}, we have
\begin{equation}\label{eqn:lemma:dist2}
\operatorname{dist}(U, \widehat{U}) \leqslant
\dfrac{1}{\mu-\mu/2} \|\nabla^2 E(\hat{\theta})-\nabla^2 E(\theta)\|_2=\dfrac{2}{\mu} \|\nabla^2 E(\hat{\theta})-\nabla^2 E(\theta)\|_2.
\end{equation}
The result can be obtained by combining \eqref{eqn:lemma:dist1} and \eqref{eqn:lemma:dist2}.
\end{proof}

\begin{proof}[Proof of \Cref{lemma:expansion}]
The first part directly comes from the identity
\begin{equation}\label{eqn:identity}
\nabla E(\theta')-\nabla E(\theta)=
\left(\int_{0}^{1}\nabla^{2}E\left(\theta+t(\theta'-\theta)\right) \mathrm{d}t\right)
\left(\theta'-\theta\right),
\end{equation}
If $\theta, \theta' \in \mathcal{U}(\mathcal{M},\delta)$, then by Assumption \ref{ass:1}(i), we obtain
\begin{equation*}
\|J\|_2
\leqslant\int_0^1 
\left\|\nabla^2E(\theta')-\nabla^2 E(\theta+t(\theta'-\theta))\right\|_2 \mathrm{d}t
\leqslant 
\frac{1}{2}M\|\theta'-\theta\|_2. 
\end{equation*}
\end{proof}

\begin{proof}[Proof of \Cref{lemma:decom}]
Using \eqref{eqn:identity} at $\hat\theta$ and $\theta$, we obtain 
\begin{equation}
\|\nabla E(\theta)\|_2
\leqslant
\int_{0}^{1}\left\|\nabla^{2}E(\hat\theta+t(\theta-\hat\theta))\right\|_2\mathrm{d}t\cdot
\|\theta-\hat\theta \|_2\leqslant L\|\theta-\hat\theta\|_2.
\end{equation}
Using \Cref{lemma:expansion} at the base points $\hat\theta$ and $\theta$, we obtain 
\begin{equation}
\nabla E(\theta)=
\nabla^2 E(\theta)\mathcal{P}_{{U}^\perp}(\theta-\hat\theta)+
\nabla^2 E(\theta)\mathcal{P}_{{U}}(\theta-\hat\theta)+ 
J(\theta-\hat\theta),
\end{equation}  
where the first term lies in ${U}^\perp$, the second in $U$, and $\|J\|_2\leqslant \tfrac M2 \|\theta-\hat\theta\|_2$.
Because of $\mathrm{dist}(\widehat{U}, U) \leqslant C \|\theta-\hat{\theta}\|_2$ from \Cref{lemma:dist}, it follows that
\begin{equation*}
\|\mathcal{P}_{{U}}(\theta-\hat\theta)\|_2=
\|\mathcal{P}_{U}(\theta-\hat{\theta})-\mathcal{P}_{\widehat{U}}(\theta-\hat{\theta})\|_2
\leqslant 
\mathrm{dist}(\widehat{U},U) \|\theta-\hat{\theta}\|_2 
\leqslant C\|\theta-\hat{\theta}\|_2^2.
\end{equation*}
Now we derive estimates for 
$\|\mathcal{P}_{{U}^\perp}\nabla E(\theta)\|_2$
and $\|\mathcal{P}_{U}\nabla E(\theta)\|_2$.
Considering that the restriction of $\nabla^2E(\theta)$ to $U^\perp$ has all eigenvalues in $(\-\infty,-\mu]\cup [\mu,+\infty)$, we have
\begin{equation*}
\begin{aligned}
& \quad \|\mathcal{P}_{{U}^\perp}\nabla E(\theta)\|_2
\geqslant
\|\nabla^2 E(\theta)\mathcal{P}_{{U}^\perp}(\theta-\hat\theta)\|_2-\|J(\theta-\hat\theta)\|_2\\
&\geqslant
\mu\left(\|\theta-\hat\theta\|_2-\|\mathcal{P}_{{U}}(\theta-\hat\theta)\|_2\right)-\|J\|_2\|\theta-\hat\theta\|_2
\geqslant\|\theta-\hat\theta\|_2\left(\mu-\tfrac{5}{2}M\|\theta-\hat\theta\|_2\right),   
\end{aligned}
\end{equation*}
From $\|\nabla^2E(\theta)\mathcal{P}_{{U}}\|_2\leqslant\frac{\mu}{4}$  in Remark \ref{re:asm}, we can derive that
\begin{equation}
\begin{aligned}
\|\mathcal{P}_{U}(\nabla E(\theta))\|_2&\leqslant \|\nabla^2E(\theta)\mathcal{P}_{{U}}\|_2\|\mathcal{P}_{{U}}(\theta-\hat\theta)\|_2+\|J\|_2\|\theta-\hat\theta\|_2\\
&\leqslant\left(\frac{\mu}{4}C+\frac M2\right)\|\theta-\hat\theta\|_2^2=M\|\theta-\hat\theta\|_2^2,
\end{aligned}
\end{equation}  
\end{proof}

\end{document}